\newcommand{\NP}{{\sf NP}}
\newcommand{\ssi}{\subseteq_i}
\newcommand{\si}{\supseteq_i}
\newcommand{\radius}{{\sf radius}}
\newcommand{\diam}{{\sf diameter}}
\newtheorem{observation}{Observation}
\newtheorem{open}{Open Problem}
\newcommand{\dia}{\hfill{$\diamond$}}
\definecolor{nicered}{RGB}{204,0,0}
\definecolor{lightblue}{RGB}{153,204,255}
\newcommand{\defproblem}[3]{
 \vspace{1mm}
\noindent\fbox{
 \begin{minipage}{0.96\textwidth}
 \begin{tabular*}{\textwidth}{@{\extracolsep{\fill}}lr} #1 & \\ \end{tabular*}
 {\bf{Input:}} #2 \\
 {\bf{Question:}} #3
 \end{minipage}
 } 
 \vspace{1mm}
}
\title{On The Complexity of Matching Cut for\\ Graphs of Bounded Radius and $H$-Free Graphs}
\author{Felicia Lucke\inst{1} \and Dani\"el Paulusma\inst{2} \and Bernard Ries\inst{1}}
\institute{University of Fribourg, Department of Informatics, Fribourg, Switzerland,  \texttt{\{felicia.lucke,bernard.ries\}@unifr.ch}
\and
Durham University, Durham, UK, \texttt{daniel.paulusma@durham.ac.uk}}
\begin{document}
\maketitle
\setcounter{footnote}{0}

\begin{abstract}
For a connected graph $G=(V,E)$, a matching $M\subseteq E$  is a matching cut of~$G$ if $G-M$ is disconnected. It is known that for an integer~$d$, the corresponding decision problem {\sc Matching Cut} is polynomial-time solvable for graphs of diameter at most~$d$ if $d\leq 2$ and \NP-complete if $d\geq 3$. We prove the same dichotomy for graphs of bounded radius.  For a graph $H$, a graph is $H$-free if it does not contain $H$ as an induced subgraph. As a consequence of our result, we can solve {\sc Matching Cut} in polynomial time for $P_6$-free graphs, extending a recent result of Feghali for $P_5$-free graphs. We then extend our result to hold even for $(sP_3+P_6)$-free graphs for every $s\geq 0$ and initiate a complexity classification of {\sc Matching Cut} for $H$-free graphs.

\medskip
\noindent
{\bf Keywords.} matching cut, radius, complexity dichotomy, $H$-free.
\end{abstract}

\section{Introduction}\label{s-intro}

Let $G=(V,E)$ be an (undirected) connected graph. 
A subset of edges $M\subseteq E$ is a {\it matching} if no two edges in $M$ have a common end-vertex, whilst 
$M$ is an {\it edge cut} if $V$ can be partitioned into sets $B$ and $R$, such that $M$ consists of all the edges with one end-vertex in $B$ and the other one in $R$. We say that $M$ is a {\it matching cut} if $M$ is a matching that is also an edge cut; see
Fig.~\ref{f-matchcut_colouring} for an example.
Matching cuts have applications in number theory~\cite{Gr70}, graph drawing~\cite{PP01}, graph homomorphisms~\cite{GPS12}, edge labelings~\cite{ACGH12} and ILFI networks~\cite{FP82}.
The corresponding decision problem is defined as follows:

\medskip
\defproblem{{\sc Matching Cut}}
{a connected graph $G$.}
{does $G$ have a matching cut?} 

\medskip
\noindent
Chv{\'{a}}tal~\cite{Ch84} proved that {\sc Matching Cut} is \NP-complete. This led to an extensive study on the computational complexity of the problem restricted to special graph 
classes~\cite{Bo09,BJ08,CHLLP21,Fe,KL16,LL19,LR03,Mo89}.
We discuss some relevant results in this section and in Section~\ref{s-con}; see~\cite{CHLLP21} for a more detailed overview of known algorithmic results, including exact and parameterized algorithms~\cite{AKK17,AS21,KKL20,KL16}.

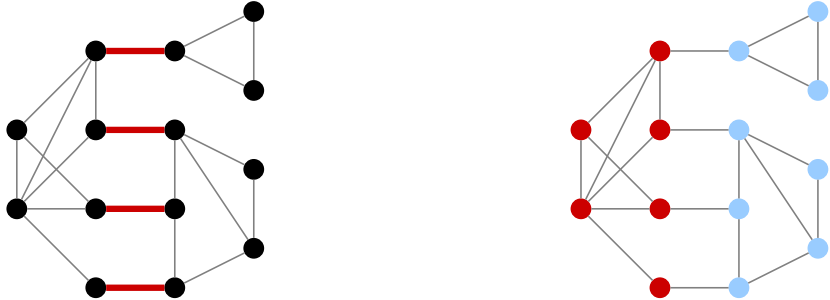
\begin{figure}
\begin{center}
\scalebox{1.5}{\begin{tikzpicture}
\tikzstyle{bvertex}=[thin,circle,inner sep=0.cm, minimum size=1.7mm, fill=lightblue, draw=lightblue]
    \tikzstyle{rvertex}=[thin,circle,inner sep=0.cm, minimum size=1.7mm, fill=nicered,draw=nicered]
    \tikzstyle{vertex}=[thin,circle,inner sep=0.cm, minimum size=1.7mm, fill=black, draw=black]
    \tikzstyle{hedge}=[thin, draw = gray]
    \tikzstyle{cutedge}=[ultra thick, draw=nicered]

 \def\k{0.7} 

	\node[vertex] (a1) at (0,\k/2){};
	\node[vertex] (a2) at (0,-\k/2){};
	\node[vertex] (a3) at (\k,1.5*\k){};
	\node[vertex] (a4) at (\k,\k/2){};
	\node[vertex] (a5) at (\k,-3*\k/2){};
	\node[vertex] (a5a) at (\k,-\k/2){};
	\node[vertex] (a6) at (2*\k,1.5*\k){};
	\node[vertex] (a7) at (2*\k,0.5*\k){};
	\node[vertex] (a8a) at (2*\k,-0.5*\k){};
	\node[vertex] (a8) at (2*\k,-1.5*\k){};
	\node[vertex] (a9) at (3*\k,2*\k){};
	\node[vertex] (a10) at (3*\k,1*\k){};
	\node[vertex] (a11) at (3*\k,0*\k){};
	\node[vertex] (a12) at (3*\k,-1*\k){};
	
	\draw[hedge](a1)--(a2);
	\draw[hedge](a1)--(a3);
	\draw[hedge](a1)--(a5a);
	\draw[hedge](a2)--(a3);
	\draw[hedge](a2)--(a4);
	\draw[hedge](a2)--(a5);
	\draw[hedge](a2)--(a5a);
	\draw[hedge](a3)--(a4);
	\draw[cutedge](a3)--(a6);
	\draw[cutedge](a4)--(a7);
	\draw[cutedge](a5)--(a8);
	\draw[cutedge](a5a)--(a8a);
	\draw[hedge](a6)--(a9);
	\draw[hedge](a6)--(a10);
	\draw[hedge](a7)--(a8a);
	\draw[hedge](a7)--(a11);
	\draw[hedge](a7)--(a12);
	\draw[hedge](a8)--(a12);
	\draw[hedge](a8a)--(a8);
	\draw[hedge](a9)--(a10);
	\draw[hedge](a11)--(a12);
	
	\begin{scope}[shift= {(5,0)}]
	\node[rvertex] (a1) at (0,\k/2){};
	\node[rvertex] (a2) at (0,-\k/2){};
	\node[rvertex] (a3) at (\k,1.5*\k){};
	\node[rvertex] (a4) at (\k,\k/2){};
	\node[rvertex] (a5) at (\k,-3*\k/2){};
	\node[rvertex] (a5a) at (\k,-\k/2){};
	\node[bvertex] (a6) at (2*\k,1.5*\k){};
	\node[bvertex] (a7) at (2*\k,0.5*\k){};
	\node[bvertex] (a8a) at (2*\k,-0.5*\k){};
	\node[bvertex] (a8) at (2*\k,-1.5*\k){};
	\node[bvertex] (a9) at (3*\k,2*\k){};
	\node[bvertex] (a10) at (3*\k,1*\k){};
	\node[bvertex] (a11) at (3*\k,0*\k){};
	\node[bvertex] (a12) at (3*\k,-1*\k){};
	
	\draw[hedge](a1)--(a2);
	\draw[hedge](a1)--(a3);
	\draw[hedge](a1)--(a5a);
	\draw[hedge](a2)--(a3);
	\draw[hedge](a2)--(a4);
	\draw[hedge](a2)--(a5);
	\draw[hedge](a2)--(a5a);
	\draw[hedge](a3)--(a4);
	\draw[hedge](a3)--(a6);
	\draw[hedge](a4)--(a7);
	\draw[hedge](a5)--(a8);
	\draw[hedge](a5a)--(a8a);
	\draw[hedge](a6)--(a9);
	\draw[hedge](a6)--(a10);
	\draw[hedge](a7)--(a8a);
	\draw[hedge](a7)--(a11);
	\draw[hedge](a7)--(a12);
	\draw[hedge](a8)--(a12);
	\draw[hedge](a8a)--(a8);
	\draw[hedge](a9)--(a10);
	\draw[hedge](a11)--(a12);
	
	\end{scope}
\end{tikzpicture}}
\caption{Left: an example of a matching cut, as indicated by the thick red edges. Right: the corresponding valid red-blue colouring, an equivalent definition given in Section~\ref{s-pre}.}\label{f-matchcut_colouring} 
\end{center}
\end{figure}

Let $G$ be a connected graph.
The {\it distance} between two vertices $u$ and $v$ in~$G$ is the {\it length} (number of edges) of a shortest path between $u$ and $v$ in~$G$. The {\it eccentricity} of a vertex $u$ is the maximum distance between $u$ and any other vertex of $G$. The {\it diameter} of $G$ is the maximum eccentricity over all vertices of~$G$.
Borowiecki and Jesse{-}J{\'{o}}zefczyk~\cite{BJ08} proved that {\sc Matching Cut} is polynomial-time solvable for graphs of diameter~$2$. Le and Le~\cite{LL19} gave a faster polynomial-time algorithm for graphs of diameter~$2$ and proved the following dichotomy.
            
\begin{theorem}[\cite{LL19}]\label{t-diameter}
For an integer~$d$, {\sc Matching Cut} for graphs of diameter at most~$d$ is polynomial-time solvable if $d\leq 2$ and \NP-complete if $d\geq 3$.
\end{theorem}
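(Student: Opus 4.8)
The statement splits into a tractability result for $d\le 2$ and a hardness result for $d\ge 3$; membership in \NP{} is clear, since one can guess a matching and check in polynomial time that removing it disconnects the graph. For the hardness side it suffices to treat $d=3$: every graph of diameter at most~$3$ also has diameter at most~$d$ for every $d\ge 3$, so \NP-hardness on diameter-$3$ instances immediately propagates to all larger~$d$. Throughout I would work with the equivalent \emph{red-blue colouring} formulation from Section~\ref{s-pre}: a matching cut corresponds to a partition of $V$ into a non-empty red set $R$ and a non-empty blue set $B$ such that every red vertex has at most one blue neighbour and every blue vertex has at most one red neighbour.

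For $d\le 2$ the case $d\le 1$ is trivial, since a complete graph has a matching cut only in the degenerate case of a single edge. The engine of the diameter-$2$ algorithm is the following structural dichotomy, which is exactly where the diameter bound enters. Call a red vertex \emph{interior} if all its neighbours are red, and define interior blue vertices symmetrically. If a valid colouring had both an interior red vertex $r_0$ and an interior blue vertex $b_0$, then $r_0$ and $b_0$ would be non-adjacent and, by diameter~$2$, would share a common neighbour $w$; but $w$ would be forced red as a neighbour of $r_0$ and blue as a neighbour of $b_0$, a contradiction. Hence in every valid colouring one colour class, say the blue one, has no interior vertex, so every blue vertex has \emph{exactly} one red neighbour. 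I would exploit this by branching over the $O(m)$ choices of a single cut edge $r^\ast b^\ast$, with $r^\ast$ red and $b^\ast$ blue, and then propagating forced colours: as soon as a vertex acquires a neighbour of the opposite colour, all of its remaining neighbours must take its own colour. The diameter-$2$ bound, together with the property that no blue vertex is interior, should make this propagation pin down the colouring up to a residual set of essentially independent choices that can be resolved by a bipartite-matching or $2$-satisfiability subroutine; running over both roles of the colours and all seed edges then yields a polynomial algorithm.

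For $d=3$ I would give a polynomial reduction from a suitable \NP-hard source problem, either {\sc Matching Cut} on general graphs (\NP-complete by Chv\'atal~\cite{Ch84}) or a restricted satisfiability problem, whose output graphs have diameter at most~$3$. The standard device for controlling the diameter is to introduce a small number of auxiliary \emph{hub} vertices adjacent to large, prescribed parts of the construction, so that any two original vertices lie within distance~$3$ of one another through a hub. To prevent these high-degree hubs from being coloured freely, which would immediately violate the matching condition, I would anchor each hub with a private gadget, such as an attached triangle or a pair of false twins, forcing it to behave monochromatically so that it neither creates a spurious matching cut nor destroys an intended one.

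The main obstacle is the tractability side: proving that, once the interior dichotomy is in place and a seed cut edge is fixed, the forced-colour propagation genuinely reduces the remaining freedom to a polynomially solvable core, rather than leaving behind an unstructured instance of the \NP-hard general problem. On the hardness side the delicate point is quantitative rather than conceptual: the hub construction must simultaneously guarantee diameter at most~$3$ \emph{and} preserve the exact correspondence between the solutions of the source instance and the matching cuts of the constructed graph, and checking that the hubs introduce no unintended cuts is where the reduction must be verified most carefully.
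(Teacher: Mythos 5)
First, note that the paper does not prove Theorem~\ref{t-diameter} at all: it is imported verbatim from Le and Le~\cite{LL19}, and only its ingredients (the propagation rules R1--R3 and Lemmas~\ref{l-ll1} and~\ref{l-ll2}) are restated in Section~\ref{s-aux}. So your attempt has to stand on its own, and as written it is an outline with two genuine gaps. On the tractability side, your ``interior vertex'' dichotomy is correct and nicely argued: in a diameter-$2$ graph a valid colouring cannot contain both a red vertex with an all-red neighbourhood and a blue vertex with an all-blue neighbourhood, since a common neighbour of the two would be forced to take both colours. But you never convert this into an algorithm. After fixing a seed cut edge and propagating forced colours (this is exactly rules R1--R3 of~\cite{LL19}), the residual uncoloured vertices are \emph{not} automatically determined, and your claim that the leftover freedom ``should'' reduce to bipartite matching or $2$-satisfiability is precisely the content of Lemma~\ref{l-ll2}: it requires an explicit $2$-SAT encoding together with an argument, using the diameter bound, that the remaining components behave monochromatically. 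That is the heart of the proof and it is missing. (A useful shortcut you overlooked: a diameter-$2$ graph has radius at most~$2$, hence a dominating star, so this half is subsumed by the paper's own proof of Theorem~\ref{t-main}.)

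On the hardness side the gap is worse than you concede. You call the difficulty ``quantitative rather than conceptual'', but attaching high-degree hub vertices to shrink the diameter is conceptually at odds with the matching-cut condition: a hub $h$ may have at most one neighbour of the opposite colour, so all but one of its neighbours are forced to the colour of~$h$. Anchoring $h$ so that it behaves ``monochromatically'' does not help --- it makes matters worse, since then essentially the entire neighbourhood of $h$ collapses onto one side of the cut, and any hub whose neighbourhood meets both sides of an intended cut in two or more vertices destroys that cut. Hubs would therefore have to be confined to one side of \emph{every} intended cut, which is incompatible with using them to bring arbitrary pairs of vertices within distance~$3$ of each other. The known diameter-$3$ construction of~\cite{LL19} achieves small diameter by building the gadgets densely from the start rather than by post hoc hub insertion; as written, your reduction does not yet exist.
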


\noindent
Le and Le~\cite{LL19} also proved that {\sc Matching Cut} for bipartite graphs of diameter at most~$d$ is polynomial-time solvable if $d\leq 3$ and \NP-complete for $d\geq 4$. 
Another recent dichotomy is due to Chen et al.~\cite{CHLLP21}, who extended results of Le and Randerath~\cite{LR03} and proved that {\sc Matching Cut} for graphs of minimum degree~$\delta$ is polynomial-time solvable if $\delta=1$ and \NP-complete if $\delta\geq 2$ 
(note that the problem is trivial if $\delta=1$).

The {\it radius} of a connected graph~$G$ is closely related to the diameter; it is defined as the minimum eccentricity over all vertices of~$G$. It is readily seen that for every connected graph~$G$, 
\[\radius(G)\leq \diam(G)\leq 2\cdot\radius(G).\]
Complexity dichotomies for graphs of bounded radius have been studied in the literature; for example,
Mertzios and Spirakis~\cite{MS16} showed that {\sc $3$-Colouring} is \NP-complete for graphs of diameter~$3$ and radius~$2$, whilst {\sc $3$-Colouring} is trivial for graphs of radius~$1$.

\subsection*{Our Results}

We will prove the following dichotomy for general graphs of bounded radius, which strengthens the polynomial part of Theorem~\ref{t-diameter} (in order to see this, consider for example an arbitrary star and subdivide each of its edges once; the new graph has radius~$2$ but its diameter is~$4$).

\begin{theorem}\label{t-main}
For an integer~$r$, {\sc Matching Cut} for graphs of radius at most~$r$ is polynomial-time solvable if $r\leq 2$ and \NP-complete if $r\geq 3$.
\end{theorem}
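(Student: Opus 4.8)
The statement has two directions, and the hardness direction turns out to be almost immediate, so the real work is the polynomial-time algorithm for $r\le 2$.

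For the hardness direction ($r\ge 3$), membership in \NP{} is clear, and for completeness I would simply invoke Theorem~\ref{t-diameter} rather than build a new reduction. That theorem gives that {\sc Matching Cut} is \NP-complete on graphs of diameter~$3$, and every connected graph $G$ satisfies $\radius(G)\le\diam(G)$. Hence the \NP-hard diameter-$3$ instances all have radius at most~$3$, so they already lie in the class of graphs of radius at most~$r$ for every fixed $r\ge 3$. This yields \NP-completeness for all $r\ge 3$ with no construction at all; note that the polynomial part below forces these hard instances to in fact have radius exactly~$3$, but we do not need that.

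For the algorithmic direction ($r\le 2$) I would use the red--blue colouring reformulation from Section~\ref{s-pre}: a connected graph has a matching cut iff its vertices can be partitioned into two nonempty parts, coloured red and blue, so that every vertex has at most one neighbour of the opposite colour. Fix a vertex $u$ of eccentricity at most~$2$ and set $L_0=\{u\}$, $L_1=N(u)$ and $L_2=V\setminus N[u]$; then every vertex of $L_2$ has a neighbour in $L_1$. Up to swapping the two colours we may assume $u$ is red. Since $u$ has at most one blue neighbour, I branch over the at most $|L_1|+1$ choices for that neighbour (one vertex of $L_1$, or none), giving $O(n)$ branches, in each of which all of $L_0\cup L_1$ is coloured with at most one blue vertex present.

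It then remains to decide, per branch and in polynomial time, whether the partial colouring extends. Here the distance structure is what I would exploit: because every vertex of $L_2$ sees $L_1$, a vertex adjacent to two already-determined vertices of the same colour is forced to that colour, and I propagate all such forcings. The role of bounded radius is that the undetermined vertices then decompose along the connected components of $G[L_2]$. Concretely, when $u$ has no blue neighbour the blue side must be a union of components of $G[L_2]$ in which every vertex has exactly one neighbour in $L_1$ and no vertex of $L_1$ has two blue neighbours; since a single such component can always be coloured blue, the branch succeeds iff one ``eligible'' component exists, which is checkable in polynomial time. I expect the main obstacle to be the remaining case, where $u$ has a blue neighbour $b$: the vertex $b$ has already spent its one permitted red neighbour on $u$, which forces all its other neighbours blue and thereby seeds a blue region inside $L_2$. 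One must then track the interaction between this seeded region and the ``at most one opposite neighbour'' budgets of the red vertices of $L_1$ --- I would argue that the residual question reduces to a $2$-satisfiability instance (or, again, to selecting compatible components of $G[L_2]$), and making this reduction precise is the technical heart of the proof.
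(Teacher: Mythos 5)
Your overall architecture coincides with the paper's: the hardness side is handled exactly as you propose (radius at most $\diam$, so the diameter-$3$ hard instances of Theorem~\ref{t-diameter} have radius at most~$3$), and for $r\le 2$ the paper also fixes a centre $u$ of a dominating star, splits into the case where $N[u]$ is monochromatic (your ``no blue neighbour'' case, which you resolve correctly and which is the paper's Lemma~\ref{l-dom2}) and the case where exactly one neighbour $b$ of $u$ is blue, branches over the $O(n)$ choices of $b$, propagates forcings, and finishes with Le and Le's $2$-SAT machinery. However, in the one case you yourself identify as the technical heart you stop at ``I would argue that the residual question reduces to a $2$-satisfiability instance,'' and this is a genuine gap: the $2$-SAT lemma of Le and Le (Lemma~\ref{l-ll2}) only decides whether a valid colouring exists \emph{in which every connected component of the uncoloured part is monochromatic}, so invoking it is not sound unless you first prove that no valid extension can split such a component.

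The missing step is precisely the claim the paper proves and the place where radius~$2$ is actually used. After propagation from the starting pair $(\{u\},\{b\})$, every still-uncoloured vertex $v$ cannot be adjacent to $b$ (otherwise it would have been forced blue, since $b$ has already spent its red budget on $u$); because $N[u]$ dominates $G$ and every vertex of $N[u]\setminus\{b\}$ is forced red, $v$ therefore has a red neighbour in $N[u]$. Hence if a connected component of the uncoloured subgraph contained a bichromatic edge, its blue endpoint would have two red neighbours (one inside the component and one in $N[u]$), contradicting validity. This monochromaticity claim is what licenses the reduction to selecting a colour per component subject to the ``at most one opposite neighbour'' budgets, which is exactly Lemma~\ref{l-ll2}. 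A secondary point: your propagation rule (``adjacent to two determined vertices of the same colour'') is weaker than the rules R1--R3 the paper uses; for the argument above you also need the forcing that any vertex adjacent to a determined vertex which has already used its opposite-colour budget inherits that vertex's colour (you state this only for $b$, but the symmetric red version is needed for the fixed point to have the properties the $2$-SAT encoding relies on).
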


\noindent
We prove Theorem~\ref{t-main} in Section~\ref{s-main} after giving some more terminology in Section~\ref{s-pre}. In Section~\ref{s-aux} we present some known results that we need as lemmas for proving Theorem~\ref{t-main}. In particular, we will use the reduction rules of Le and Le~\cite{LL19}, which they used in their polynomial-time algorithms for graphs of diameter~$2$ and bipartite graphs of diameter~$3$.

A graph $H$ is an {\it induced subgraph} of $G$ if $H$ can be obtained from $G$ after removing all vertices of $V(G)\setminus V(H)$. A graph~$G$ is {\it $H$-free} if $G$ does not contain an induced subgraph isomorphic to $H$.
We let $P_r$ denote the path on $r$ vertices.
Feghali~\cite{Fe} recently proved that {\sc Matching Cut} is polynomial-time solvable for $P_5$-free graphs and that there exists an integer~$r$, such that {\sc Matching Cut} is \NP-complete for $P_r$-free graphs. 
In a recent paper~\cite{LPRb}, we showed that the constant $r$ in~\cite{Fe} is equal to~$27$. In the same paper~\cite{LPRb} we proved that {\sc Matching Cut} is \NP-complete even for $(4P_5,P_{19})$-free graphs (by a slight modification of the construction from~\cite{Fe}).

As a consequence of the polynomial part of Theorem~\ref{t-main} we can show the following result.

\begin{corollary}\label{c-p6}
{\sc Matching Cut} is polynomial-time solvable for $P_6$-free graphs.
\end{corollary}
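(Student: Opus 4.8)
The plan is to use the polynomial-time algorithm for graphs of radius at most~$2$ provided by the polynomial part of Theorem~\ref{t-main} as the engine, and to reduce the $P_6$-free case to instances on which it applies. Two easy observations frame the problem. First, if $G$ has a vertex of degree~$1$, then the single edge incident to it is already a matching cut, so $G$ is a yes-instance; hence we may assume minimum degree at least~$2$. Second, every shortest path of length~$\ell$ in~$G$ is an induced $P_{\ell+1}$, so $P_6$-freeness forces $\diam(G)\le 4$ and therefore $\radius(G)\le 4$. One cannot, however, simply feed~$G$ to Theorem~\ref{t-main}: there exist connected $P_6$-free graphs of radius~$3$ (for instance a $4$-cycle with a triangle glued onto each of its four vertices). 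So the real content is to handle the graphs whose radius exceeds~$2$.

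To do this I would first expose the dense substructure that keeps the radius above~$2$. A connected $P_6$-free graph has a dominating induced~$C_6$ or a dominating complete bipartite subgraph with parts $X$ and~$Y$, and such a structure can be found in polynomial time. If $\min(|X|,|Y|)\le 1$ the dominating set is a star, whose centre has eccentricity at most~$2$, so $\radius(G)\le 2$ and Theorem~\ref{t-main} applies directly. The only obstructions that remain are thus a dominating complete bipartite subgraph with $\min(|X|,|Y|)\ge 2$ and a dominating induced~$C_6$.

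The crucial point is that valid red--blue colourings (the reformulation recalled in Section~\ref{s-pre}) are extremely rigid on such structures, because every vertex may have at most one neighbour of the opposite colour. If the biclique contains a~$K_{2,3}$, then it must be monochromatic: a side of size at least~$3$ forces the opposite side to be monochromatic, and a single opposite-coloured vertex on either side then violates the colouring condition. In that case I would contract the whole (monochromatic) biclique into a single vertex~$z$ using the reduction rules of Le and Le recalled in Section~\ref{s-aux}; the resulting instance is equivalent to the original one, and since the biclique dominated~$G$, the vertex~$z$ is adjacent to every other vertex, so the contracted graph has radius at most~$2$ and Theorem~\ref{t-main} decides it. The only non-monochromatic possibilities left are a~$K_{2,2}$, which admits just its two ``diagonal'' colourings, and the dominating induced~$C_6$, which has only a constant number of valid colourings.

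I expect the main obstacle to be exactly these finitely many residual configurations. For each of the constantly many colourings of a dominating~$K_{2,2}$ or~$C_6$ one obtains a partial red--blue colouring that already assigns a colour to a dominating set, and the work is to show that deciding whether such a partial colouring extends to a valid red--blue colouring can be done in polynomial time — again by propagating colours along the unique permitted bichromatic edge at each vertex and invoking the reduction rules, which here have to be shown to terminate with either a contradiction or a colouring. The delicate part is arguing that once the dominating structure is fixed the vertices attached only to~$X$ and those attached only to~$Y$ interact in a controlled way (otherwise an induced~$P_6$ would appear), so that the propagation is forced rather than branching. Combining the star case, the contractible large-biclique case, and the bounded case analysis for $K_{2,2}$ and~$C_6$ then yields a polynomial-time algorithm, extending Feghali's $P_5$-free result to $P_6$-free graphs.
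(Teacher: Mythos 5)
Your overall decomposition is the same as the paper's: find, via the van~'t~Hof--Paulusma characterization (Theorem~\ref{t-hp}), a dominating induced $C_6$ or a dominating complete bipartite subgraph, dispatch the star case through the radius-$2$ algorithm of Theorem~\ref{t-main}, argue that a dominating biclique containing a $K_{2,3}$ must be monochromatic in any valid red--blue colouring, and treat the remaining $K_{2,2}$ and $C_6$ configurations by bounded enumeration. The monochromaticity argument for $K_{2,3}$ is correct. However, there is a genuine flaw in how you exploit it: contracting the monochromatic dominating biclique $D$ to a single vertex $z$ does \emph{not} yield an equivalent instance. Contraction is not safe for {\sc Matching Cut} even for sets that are forced to be monochromatic: if two vertices of $D$ each have their own (distinct) opposite-coloured neighbour outside $D$, the merged vertex $z$ acquires two opposite-coloured neighbours, so a valid colouring of $G$ need not survive the contraction. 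Concretely, take a dominating $K_{2,3}$ all of whose vertices are red, plus an edge $xy$ outside it with $x$ matched to $b_1\in D$ and $y$ matched to $b_2\in D$ and no other edges leaving $\{x,y\}$: the graph has the matching cut $\{xb_1,yb_2\}$, but after contraction $\{z,x,y\}$ induces a triangle in which every vertex has degree at least~$2$, and the radius-$1$ test (existence of a degree-$1$ vertex) reports {\tt no}. Moreover, the rules R1--R3 you invoke are propagation rules for a starting pair, not a contraction device, so they do not justify this step either. The correct tool here is Lemma~\ref{l-dom2}: once $D$ is known to be monochromatic, every component of $G-D$ is monochromatic and one simply tries each component as the unique opposite-coloured one; this is what the paper does.

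For the residual $K_{2,2}$ and $C_6$ cases you correctly identify that only constantly many colourings of the dominating structure need to be considered, but the ``propagation along the unique permitted bichromatic edge'' that you flag as the delicate part is not actually needed: since the structure is dominating and has constant size, one can, as in Lemma~\ref{l-dom}, additionally guess for each of its vertices the at most one opposite-coloured neighbour; this already determines the colour of \emph{every} vertex of $G$ (each vertex has a neighbour in the dominating set), so there are only $O(2^gn^g)$ candidate colourings to verify and no extension problem remains. With the contraction step replaced by Lemma~\ref{l-dom2} and the residual cases handled by Lemma~\ref{l-dom}, your argument becomes essentially the paper's proof.
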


\noindent
We prove Corollary~\ref{c-p6} in Section~\ref{s-main} as well. 
In Section~\ref{s-extension} we prove that if {\sc Matching Cut} is polynomial-time solvable on a class of $H$-free graphs, then it is so on the class of $(P_3+H)$-free graphs (here, the graph $P_3+H$ denotes the disjoint union of the graphs $P_3$ and $H$). This means in particular that {\sc Matching Cut} is polynomial-time solvable even for $(sP_3+P_6)$-free graphs for every $s\geq 0$. 

In Section~\ref{s-class} we show some new hardness results of {\sc Matching Cut} for $H$-free graphs. In the same section, we also combine all our new results with known results to give a state-of-the-art summary of {\sc Matching Cut} for $H$-free graphs. We finish our paper with a number of open problems in Section~\ref{s-con}.

\section{Preliminaries}\label{s-pre}

Let $G=(V,E)$ be a graph.
For a vertex $u$, we let $N(u)=\{v\; |\; uv\in E\}$ denote the {\it neighbourhood} of $u$ in $G$.
Let $S\subseteq V$. The {\it neighbourhood} of $S$ in $G$ is the set $N(S)=\bigcup_{u\in S}N(u)\setminus S$. 
We let $G[S]$ denote the subgraph of $G$ {\it induced} by $S$, that is, $G[S]$ can be obtained from $G$ after deleting the vertices of $S$.
Moreover, $S$ is a {\it dominating} set of $G$ if every vertex of $V\setminus S$ has at least one neighbour in~$S$. 
In that case we also say that $G[S]$ {\it dominates} $G$.
The {\it domination number} of a graph $G$ is the size of a smallest dominating set of $G$.

Recall that we denote the path on $r$ vertices by $P_r$. We let $C_s$ denote the cycle on $s$ vertices.
A bipartite graph with non-empty partition classes $V_1$ and $V_2$ is {\it complete} if there exists an edge between 
every vertex of $V_1$ and every vertex of $V_2$. We let $K_{n_1,n_2}$ denote the complete bipartite graph with partition classes of size $n_1$ and $n_2$, respectively. The graph $K_{1,n_2}$ denotes the {\it star} on $n_2+1$ vertices.

We denote the {\it disjoint union} of two graphs $G_1$ and $G_2$ by $G_1+G_2=(V(G_1)\cup V(G_2),E(G_1)\cup E(G_2))$. We denote the disjoint union of $s$ copies of a graph $G$ by $sG$.

When constructing matching cuts we will often use an equivalent definition in terms of vertex colourings (see also~\cite{CHLLP21,Ch84,Fe}). Let $G=(V,E)$ be a connected graph. A {\it red-blue colouring} of $G$ colours every vertex of $G$ either red or blue. A red-blue colouring of $G$ is {\it valid} if every blue vertex has at most one red neighbour; every red vertex has at most one blue neighbour; and both colours red and blue are used at least once. We refer to Fig.~\ref{f-matchcut_colouring} for an example.

For a red-blue colouring, we let $R$ and $B$ denote the sets that consist of all vertices coloured red or blue, respectively (so $V=R\cup B$). We call $R$ the {\it red} set and $B$ the {\it blue} set of the red-blue colouring.
We let $R'$ consist of all vertices in $R$ that have a (unique) blue neighbour, and similarly, we let $B'$ consist of all vertices of $B$ that have a (unique) red neighbour. We call $R'$ the {\it red interface} and $B'$ the {\it blue interface} of the red-blue colouring.

From a matching cut $M$ of a connected graph $G$ we can construct a valid red-blue colouring by colouring the vertices of one connected component of $G-M$ red and all other vertices of $G$ blue. Similarly, from a valid red-blue colouring we can construct a matching cut by taking all edges with one end-vertex in the red interface and one end-vertex in the blue interface. Hence, we can make the following observation, which as mentioned is well known.

\begin{observation}\label{o}
A connected graph $G$ has a matching cut if and only if $G$ admits a valid red-blue colouring.
\end{observation}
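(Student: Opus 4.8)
The plan is to prove the two directions of the equivalence separately, essentially formalising the two constructions sketched in the paragraph preceding the statement.

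For the forward direction, I would start from a matching cut $M$ of $G$. Since $G-M$ is disconnected, I fix one connected component $C$ of $G-M$, colour every vertex of $C$ red, and colour every remaining vertex blue. Because $G-M$ has at least two components, $C$ is a proper non-empty subset of $V$, so both colours are used. The key point to verify is the degree condition on the interfaces. Take any red vertex $v\in C$ and consider a blue neighbour $u$ of $v$ in $G$: since $u$ lies in a different component of $G-M$ than $v$, the edge $uv$ cannot survive in $G-M$, hence $uv\in M$. As $M$ is a matching, at most one edge of $M$ is incident to $v$, so $v$ has at most one blue neighbour. The symmetric argument, applied to a blue vertex and its red neighbours, shows that every blue vertex has at most one red neighbour. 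Thus the colouring is valid.

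For the backward direction, I would start from a valid red-blue colouring and define $M$ to be the set of all edges having one red and one blue end-vertex; equivalently, $M$ is the set of edges between the red interface and the blue interface. That $M$ is a matching follows directly from validity: each red vertex is incident to at most one edge of $M$ (it has at most one blue neighbour) and each blue vertex likewise, so no two edges of $M$ share an end-vertex. That $M$ is a cut follows because in $G-M$ there is no edge joining a red vertex to a blue vertex, so no path in $G-M$ connects the (non-empty) red set to the (non-empty) blue set; hence $G-M$ is disconnected. Therefore $M$ is a matching cut.

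I do not expect any genuine obstacle here, since both constructions are explicit and the verifications are routine applications of the matching and colouring conditions. The only point worth stating carefully is that the edges deleted (respectively, the edges selected) form a matching precisely because each vertex has at most one neighbour of the opposite colour, which is exactly the content of the validity conditions; this is the single observation that makes the two directions mirror each other.
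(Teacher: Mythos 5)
Your proof is correct and follows exactly the two constructions that the paper itself sketches in the paragraph preceding the observation (colour one component of $G-M$ red and the rest blue; conversely take all edges between the red and blue interfaces), merely filling in the routine verifications. No issues.
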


\section{Auxiliary Results}\label{s-aux}

\noindent
In the remainder of our paper, we use Observation~\ref{o} and mainly search for valid red-blue colourings.
We need the following lemma, which has been used (implicitly) to prove other results for {\sc Matching Cut} as well, such as the result for $P_5$-free graphs~\cite{Fe}. We include a short proof for completeness.

\begin{lemma}\label{l-dom}
For every integer $g$, it is possible to find in $O(2^gn^{g+2})$ time a valid red-blue colouring (if it exists)
of an $n$-vertex graph with domination number~$g$.
\end{lemma}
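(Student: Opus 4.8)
The plan is to reduce the search for a valid red-blue colouring (which, by Observation~\ref{o}, is exactly what we want) to a bounded number of guesses on a small dominating set, followed by a propagation phase. First I would obtain a dominating set $D$ of size $g$: since the domination number equals $g$, I enumerate all $\binom{n}{g}=O(n^g)$ subsets of size $g$ and test each for domination in $O(n^2)$ time, keeping one dominating set $D=\{d_1,\dots,d_g\}$. (In fact one may skip this step and simply run the procedure below for every size-$g$ subset; correctness is preserved because the output is always verified explicitly, and at least one subset is guaranteed to be dominating.) This costs $O(n^{g+2})$.

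Next I would use that any valid red-blue colouring is essentially pinned down once we know its restriction to $D$. So I iterate over all $2^g$ red-blue colourings of $D$; for each I attempt to extend it to $V\setminus D$ and then check validity directly. Since $D$ dominates $G$, every $v\in V\setminus D$ has a neighbour in $D$, and this is what drives the forcing. The basic rules are: if $v$ has two red neighbours in $D$ then $v$ must be red, and symmetrically for blue (a blue vertex may have at most one red neighbour, and vice versa); if $v$ is forced to both colours, the current colouring of $D$ is rejected. I would strengthen this by the saturation rules for $D$ itself: once a red vertex $d_i$ has acquired a blue neighbour, every remaining neighbour of $d_i$ is forced red, and symmetrically for blue. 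Propagating these rules to a fixpoint takes $O(n^2)$ time per colouring of $D$.

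The hard part will be handling the vertices that remain undetermined after propagation, together with the global requirement that both colours are used. Pure propagation is provably insufficient: already for $g=1$, the single dominator $d$ (say coloured red) is adjacent to every vertex, so no vertex is forced, yet a valid colouring exists only if some vertex can be turned blue, and for $d$ red this happens \emph{exactly} when $d$ has a neighbour of degree~$1$, which no local forcing rule detects. The structural fact I would exploit to control this is that the matching-cut condition \emph{localizes} each choice: every vertex is incident to at most one edge of the cut, so a vertex whose colour differs from that of its dominator spends both its own and its dominator's unique crossing edge. This sharply limits how the undetermined vertices can deviate from their dominators and lets me resolve the remaining freedom by a bounded polynomial-time search (in the spirit of the $g=1$ case above, testing each candidate minority vertex), comfortably within the $O(n^2)$ budget available per colouring of~$D$.

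Putting the pieces together, the total running time is $O(n^g)$ choices of $D$ times $O(2^g)$ colourings of $D$ times $O(n^2)$ for propagation, completion and verification, that is, $O(2^g n^{g+2})$, as required. The step I expect to need the most care, both for correctness and for staying inside the time bound, is the completion of an under-determined colouring: one must argue that after propagation the leftover choices decouple into independently solvable, size-bounded pieces rather than interacting globally, so that a valid completion (if any exists) is found in $O(n^2)$ time.
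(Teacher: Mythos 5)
Your overall architecture (dominating set, $2^g$ colourings of $D$, exploit the fact that each vertex is incident to at most one cut edge) matches the paper's, but the step you yourself flag as delicate --- completing an under-determined colouring after propagation --- is a genuine gap, not a routine verification. Your claim that the leftover choices ``decouple into independently solvable, size-bounded pieces'' resolvable in $O(n^2)$ time is asserted, not proved, and it is not true in the form stated. The surviving freedom consists precisely of deciding, for each $d_i\in D$ whose unique opposite-coloured neighbour is not yet fixed, \emph{which} of its neighbours (if any) takes that slot; these decisions interact, e.g.\ a vertex adjacent to a red $d_1$ and a blue $d_2$ consumes one slot or the other depending on its colour, and two such vertices cannot both consume the same slot, while adjacent uncoloured vertices may legitimately receive different colours (so the components of the uncoloured part need not be monochromatic, which also blocks a direct appeal to the $2$-SAT machinery of Lemma~\ref{l-ll2}). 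Testing ``each candidate minority vertex'' works for $g=1$ because there is a single slot, but for general $g$ the natural search is over all $O(n^g)$ ways to assign a cross-neighbour (or none) to each of the $g$ dominators --- a cost that does not fit in the $O(n^2)$ per-colouring budget you have left.

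The paper's proof closes exactly this hole, and does so by reallocating your $n^g$ factor: it finds \emph{one} dominating set $D$ (a one-time additive cost; there is no need to rerun the procedure for every size-$g$ subset), and then, for each of the $2^g$ colourings of $D$, directly guesses for every unsaturated vertex of $D$ which of its $O(n)$ neighbours is its unique opposite-coloured neighbour, colouring all its remaining neighbours with its own colour. Because $D$ dominates $G$, these $O(2^g n^g)$ guesses each determine the colour of \emph{every} vertex --- no propagation and no completion phase is needed --- and each candidate colouring is verified in $O(n^2)$ time. To repair your argument, replace ``propagate, then complete within $O(n^2)$'' by this per-dominator slot guessing; as written, the proposal does not establish the lemma.
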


\begin{proof}
Let $g\geq 1$ be an integer, and let $G$ be a graph with domination number at most $g$.
Hence, $G$ has a dominating set $D$ of size at most $g$. We consider all options of colouring the vertices of $D$ red or blue; note that this number is $2^{|D|}\leq 2^g$. For every red vertex of $D$ with no blue neighbour, we consider all $O(n)$ options of colouring at most one of its neighbours blue (and thus all of its other neighbours will be coloured red).
Similarly, for every blue vertex of $D$ with no red neighbour, we consider all $O(n)$ options of colouring at most one of its neighbours red (and thus all of its other neighbours will be coloured blue). Finally, for every red vertex in $D$ with already one blue neighbour in $D$, we colour all its yet uncoloured neighbours red. 
Similarly, for every blue vertex in $D$ with already one red neighbour in $D$, we colour all its yet uncoloured neighbours blue.

As $D$ is a dominating set, the above means that we guessed a red-blue colouring of the whole graph $G$. We can check in $O(n^2)$ time if a red-blue colouring is valid. Moreover, the total number of red-blue colourings that we must consider is $O(2^gn^g)$. 
\qed
\end{proof}

\noindent
Consider a red-blue colouring of a graph $G=(V,E)$.
A subset $S\subseteq V$ is \textit{monochromatic} if every vertex of $S$ has the same colour.
We need the following known lemma (see e.g.~\cite{Fe} which uses it implicitly); again we added a short proof for completeness.

\begin{lemma}\label{l-dom2}
Let $D$ be a dominating set of a connected graph $G$. It is possible to check in polynomial time if $G$ has a valid red-blue colouring in which $D$ is monochromatic.
\end{lemma}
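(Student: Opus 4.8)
The plan is to reduce the search for a valid colouring with $D$ monochromatic to a simple test on the connected components of $G-D$. By symmetry between the two colours, I would treat the case where $D$ is entirely red (the all-blue case follows by swapping red and blue), so from now on assume all vertices of $D$ are coloured red and we must decide whether the remaining vertices can be coloured to obtain a valid red-blue colouring.

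The first step is a structural observation about the blue set $B\subseteq V\setminus D$. Since $D$ is a dominating set, every vertex $v\in V\setminus D$ has at least one neighbour in $D$, and this neighbour is red. Hence if $v$ is blue it already has one red neighbour inside $D$; validity then forces this to be its \emph{only} red neighbour, so every neighbour of $v$ lying outside $D$ must also be blue. Consequently $B$ is closed under taking neighbours in $G-D$, i.e.\ $B$ is a union of connected components of $G-D$. Two further consequences are immediate: each blue vertex has \emph{exactly} one neighbour in $D$ (at least one by domination, at most one by validity), and each red vertex of $V\setminus D$ has \emph{no} blue neighbour at all, since its neighbours lie either in $D$ or in its own component of $G-D$, which is entirely red because $B$ is a union of whole components. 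Thus the only vertices whose ``at most one blue neighbour'' condition is not automatic are the vertices of $D$.

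With this in hand, call a connected component $C$ of $G-D$ \emph{eligible} if every vertex of $C$ has exactly one neighbour in $D$ and every vertex of $D$ has at most one neighbour in $C$. I would then prove that $D$ (coloured red) extends to a valid red-blue colouring if and only if $G-D$ has at least one eligible component. For the easy direction, colouring the vertices of a single eligible component $C$ blue and all other vertices red yields a valid colouring: both colours are used (as $D\neq\emptyset$ and $C\neq\emptyset$), each blue vertex has its unique $D$-neighbour as its only red neighbour, each vertex of $D$ has at most one blue neighbour by eligibility, and every other red vertex has none by the observation above. Conversely, in any valid colouring the blue set is a nonempty union of components of $G-D$, and each such component is eligible: exactly one $D$-neighbour per vertex was shown above, while a $D$-vertex with two neighbours in a blue component would have two blue neighbours and violate validity.

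This characterisation gives the algorithm: compute the components of $G-D$ and test the two local eligibility conditions for each, doing this once for $D$ red and once for $D$ blue; this is clearly polynomial. The main point requiring care—rather than a genuine obstacle, since the lemma is elementary once the structure is seen—is that both eligibility conditions are truly needed, in particular remembering to forbid a $D$-vertex from having two neighbours inside the \emph{same} candidate blue component and not merely across distinct components; the verification that a single eligible component already certifies a valid colouring is what keeps the test this simple.
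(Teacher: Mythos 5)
Your proof is correct and follows essentially the same route as the paper: both arguments hinge on the observation that, with $D$ monochromatic (say red), domination forces every connected component of $G-D$ to be monochromatic, so one only needs to decide which single component can be blue. The only difference is cosmetic --- you replace the paper's generic ``try each component as the blue one and verify validity'' step by an explicit local eligibility test, which is a harmless refinement.
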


\begin{proof}
Consider a valid red-blue colouring of $G$, in which $D$ is monochromatic.
Assume without loss of generality that every vertex of $D$ is coloured red.
Let $C$ be a connected component of $G-D$. If $C$ is not monochromatic, then $C$ has an edge $uv$ where $u$ is red and $v$ is blue. However, now $v$ has at least two red neighbours, namely $u$ and a neighbour in $D$ (such a neighbour exists, as $D$ is a dominating set of $G$). This is a contradiction. Hence, the vertex set of every connected component is monochromatic. Moreover, we may assume without loss of generality that the vertices of exactly one connected component of $G-D$ are coloured blue (else we can safely recolour the vertices of some connected component from blue to red). Hence, we can check all $O(n)$ options of choosing this unique blue connected component. For each option we check in polynomial time if the obtained red-blue colouring is valid.
\qed
\end{proof}

\begin{figure}
\begin{center}
\scalebox{1.2}{\begin{tikzpicture}
\tikzstyle{bvertex}=[thin,circle,inner sep=0.cm, minimum size=1.7mm, fill=lightblue, draw=lightblue]
    \tikzstyle{rvertex}=[thin,circle,inner sep=0.cm, minimum size=1.7mm, fill=nicered,draw=nicered]
    \tikzstyle{vertex}=[thin,circle,inner sep=0.cm, minimum size=1.7mm, fill=black, draw=black]
    \tikzstyle{hedge}=[thin, draw = gray]
    \tikzstyle{cutedge}=[ultra thick, draw=OrangeRed]
    \tikzstyle{rahmen1}=[rounded corners = 5pt,draw, dashed, minimum width = 15pt, minimum height = 35pt]
    \tikzstyle{rahmen2}=[rounded corners = 5pt,draw, , minimum width = 43pt, minimum height = 60pt]
    \tikzstyle{rahmen3}=[rounded corners = 5pt,draw, dashed, minimum width = 15pt, minimum height = 35pt]
    \tikzstyle{rahmen4}=[rounded corners = 5pt,draw, , minimum width = 43pt, minimum height = 58.5pt]

 \def\k{0.8} 

\node[rvertex] (a1) at (0,\k/2){};
	\node[rvertex] (a2) at (0,-\k/2){};
	\node[rvertex] (a3) at (\k,1.5*\k){};
	\node[rvertex] (a4) at (\k,\k/2){};
	\node[rvertex] (a5) at (\k,-3*\k/2){};
	\node[rvertex] (a5a) at (\k,-\k/2){};
	\node[bvertex] (a6) at (2*\k,1.5*\k){};
	\node[bvertex] (a7) at (2*\k,0.5*\k){};
	\node[bvertex] (a8a) at (2*\k,-0.5*\k){};
	\node[bvertex] (a8) at (2*\k,-1.5*\k){};
	\node[bvertex] (a9) at (3*\k,2*\k){};
	\node[bvertex] (a10) at (3*\k,1*\k){};
	\node[bvertex] (a11) at (3*\k,0*\k){};
	\node[bvertex] (a12) at (3*\k,-1*\k){};
	
	\draw[hedge](a1)--(a2);
	\draw[hedge](a1)--(a3);
	\draw[hedge](a1)--(a5a);
	\draw[hedge](a2)--(a3);
	\draw[hedge](a2)--(a4);
	\draw[hedge](a2)--(a5);
	\draw[hedge](a2)--(a5a);
	\draw[hedge](a3)--(a4);
	\draw[hedge](a3)--(a6);
	\draw[hedge](a4)--(a7);
	\draw[hedge](a5)--(a8);
	\draw[hedge](a5a)--(a8a);
	\draw[hedge](a6)--(a9);
	\draw[hedge](a6)--(a10);
	\draw[hedge](a7)--(a8a);
	\draw[hedge](a7)--(a11);
	\draw[hedge](a7)--(a12);
	\draw[hedge](a8)--(a12);
	\draw[hedge](a8a)--(a8);
	\draw[hedge](a9)--(a10);
	\draw[hedge](a11)--(a12);
	
	\node[rahmen1] at (\k,0){};
	\node[] at(\k,0){$S$};
	\node[rahmen2] at (\k/2,0.35){};
	\node[] at(0,1){$X$};
	\node[rahmen3] at (2*\k,0){};
	\node[] at(15*\k/8,0){$T$};
	\node[rahmen4] at (2.5*\k,-0.35){};
	\node[] at(3*\k,-1.1){$Y$};

\end{tikzpicture}}
\caption{An example of a graph $G$ with a valid red-blue $(S,T,X,Y)$-colouring.}\label{f-STXYcolouring} 
\end{center}
\end{figure}
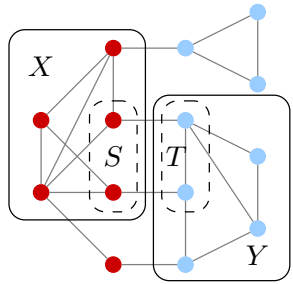

\noindent
Let $G=(V,E)$ be a connected graph and $S,T,X,Y\subseteq V$ be four non-empty sets with $S\subseteq X$, $T\subseteq Y$ and $X\cap Y=\emptyset$. A  {\it red-blue $(S,T,X,Y)$-colouring} of $G$
is a red-blue colouring with a red set containing $X$; a blue set containing $Y$; a red interface containing $S$ and a blue interface containing $T$. Note that $V\setminus (X\cup Y)$ might be non-empty. Moreover, the red and blue interfaces may also contain vertices not in $S$ and $T$, respectively. We refer to Fig.~\ref{f-STXYcolouring} for an example.

Now, let $S'$ and $T'$ be two non-empty subsets of $V$ with $S'\cap T'=\emptyset$ such that every vertex of $S'$ is adjacent to exactly one vertex of $T'$, and vice versa. We call $(S',T')$ a {\it starting pair} of $G$. 

For a starting pair, Le and Le~\cite{LL19} define five propagation rules that we give below -- in our terminology -- as rules R1--R3. The goal of these rules is to extend $S'$ and $T'$ by finding as many vertices as possible whose colour must be either red or blue in any valid red-blue colouring with a red interface containing $S'$ and a blue interface containing $T'$.  
We will place any newly found red vertices in a set $X$ that already contains $S'$ and any newly found blue vertices in a set $Y$ that already contains $T'$. We let $S$ be the subset of $X$ consisting of red vertices with a blue neighbour in $Y$, and we let $T$  be the subset of $Y$ consisting of blue vertices with a red neighbour in $X$. So it holds that $S'\subseteq S\subseteq X$ and $T'\subseteq T\subseteq Y$. The vertices of~$S$ will belong to the red interface and the vertices of $T$ will belong to the blue interface of the valid red-blue colouring that we are trying to construct. Note that the vertices of $X\setminus S$ and $Y\setminus T$ may or may not belong to the red or blue interface (as this depends on the colour of the uncoloured vertices in $V\setminus (X\cup Y)$, which we still have to determine).

We now state the propagation rules R1--R3.
Initially we set $X:=S:=S'$ and $Y:=T:=T'$.  As mentioned, R1--R3 try to put vertices of $Z=V\setminus (X\cup Y)$ into one of the sets $S$, $X\setminus S$, $T$ or $Y\setminus T$. They are defined as follows (see Fig.~\ref{f-starting_pair} for an example):

\begin{itemize}
\item [{\bf R1.}] Return {\tt no} (that is, $G$ has no valid red-blue $(S',T',S',T')$-colouring) if  a vertex $v\in Z$ is

\begin{itemize}
\item adjacent to a vertex in $S$ and to a vertex in $T$, or
\item adjacent to a vertex in $S$ and to two vertices in $Y\setminus T$, or
\item adjacent to a vertex in $T$ and to two vertices in $X\setminus S$, or
\item adjacent to two vertices in $X\setminus S$ and to two vertices in $Y\setminus T$.\\[-10pt]
\end{itemize}

\item [{\bf R2.}]  Assume $v\in Z$ and R1 does not apply. If $v$ is adjacent to a vertex in $S$ or to two vertices of $X\setminus S$, then  move $v$ from $Z$ to $X$. If moreover $v$ is adjacent to a (unique) vertex $w$ in $Y$, then also add $v$ to $S$ and $w$ to $T$.\\[-10pt]
\item [{\bf R3}.] Assume $v\in Z$ and R1 does not apply. If $v$ is adjacent to a vertex in $T$ or to two vertices of $Y\setminus T$, then move $v$ from $Z$ to $Y$. If moreover $v$ is adjacent to a (unique) vertex $w$ in $X$, then also add $v$ to $T$ and $w$ to $S$.
\end{itemize}

\begin{figure}
\begin{center}
\scalebox{1.2}{\begin{tikzpicture}
%starting pair

\tikzstyle{bvertex}=[thin,circle,inner sep=0.cm, minimum size=1.7mm, fill=lightblue, draw=lightblue]
    \tikzstyle{rvertex}=[thin,circle,inner sep=0.cm, minimum size=1.7mm, fill=nicered,draw=nicered]
    \tikzstyle{b1vertex}=[thick,circle,inner sep=0.cm, minimum size=1.7mm, fill=lightblue, draw=black]
    \tikzstyle{r1vertex}=[thick,circle,inner sep=0.cm, minimum size=1.7mm, fill=nicered,draw=black]
    \tikzstyle{vertex}=[thin,circle,inner sep=0.cm, minimum size=1.7mm, fill=none, draw=black]
    \tikzstyle{hedge}=[thin, draw = gray]
    \tikzstyle{cutedge}=[ultra thick, draw=OrangeRed]
    \tikzstyle{rahmen1}=[rounded corners = 5pt,draw, dashed, minimum width = 15pt, minimum height = 35pt]
    \tikzstyle{rahmen2}=[rounded corners = 5pt,draw, , minimum width = 43pt, minimum height = 60pt]
    \tikzstyle{rahmen3}=[rounded corners = 5pt,draw, dashed, minimum width = 15pt, minimum height = 35pt]
    \tikzstyle{rahmen4}=[rounded corners = 5pt,draw, , minimum width = 43pt, minimum height = 58.5pt]

 \def\k{0.8} 

\node[rvertex] (a1) at (0,\k/2){};
	\node[rvertex] (a2) at (0,-\k/2){};
	\node[rvertex] (a3) at (\k,1.5*\k){};
	\node[r1vertex] (a4) at (\k,\k/2){};
	\node[vertex] (a5) at (\k,-3*\k/2){};
	\node[rvertex] (a5a) at (\k,-\k/2){};
	\node[vertex] (a6) at (2*\k,1.5*\k){};
	\node[b1vertex] (a7) at (2*\k,0.5*\k){};
	\node[bvertex] (a8a) at (2*\k,-0.5*\k){};
	\node[bvertex] (a8) at (2*\k,-1.5*\k){};
	\node[vertex] (a9) at (3*\k,2*\k){};
	\node[vertex] (a10) at (3*\k,1*\k){};
	\node[bvertex] (a11) at (3*\k,0*\k){};
	\node[bvertex] (a12) at (3*\k,-1*\k){};
	
	\draw[hedge](a1)--(a2);
	\draw[hedge](a1)--(a3);
	\draw[hedge](a1)--(a5a);
	\draw[hedge](a2)--(a3);
	\draw[hedge](a2)--(a4);
	\draw[hedge](a2)--(a5);
	\draw[hedge](a2)--(a5a);
	\draw[hedge](a3)--(a4);
	\draw[hedge](a3)--(a6);
	\draw[hedge](a4)--(a7);
	\draw[hedge](a5)--(a8);
	\draw[hedge](a5a)--(a8a);
	\draw[hedge](a6)--(a9);
	\draw[hedge](a6)--(a10);
	\draw[hedge](a7)--(a8a);
	\draw[hedge](a7)--(a11);
	\draw[hedge](a7)--(a12);
	\draw[hedge](a8)--(a12);
	\draw[hedge](a8a)--(a8);
	\draw[hedge](a9)--(a10);
	\draw[hedge](a11)--(a12);
	
	\node[rahmen1] at (\k,0){};
	\node[] at(\k,0){$S$};
	\node[rahmen2] at (\k/2,0.35){};
	\node[] at(0,1){$X$};
	\node[rahmen3] at (2*\k,0){};
	\node[] at(15*\k/8,0){$T$};
	\node[rahmen4] at (2.5*\k,-0.35){};
	\node[] at(3*\k,-1.1){$Y$};

\end{tikzpicture}}
\caption{An example of an application of rules R1--R3 that results in a 4-tuple $(S,T,X,Y)$. The set $S'$ consists of the thick red vertex and the set $T'$ consists of the thick blue vertex. Note that $S'\subseteq S\subseteq X$ and $T'\subseteq T\subseteq Y$, and that every valid red-blue $(S',T',S',T')$-colouring is a valid red-blue $(S,T,X,Y)$-colouring.}\label{f-starting_pair} 
\end{center}
\end{figure}
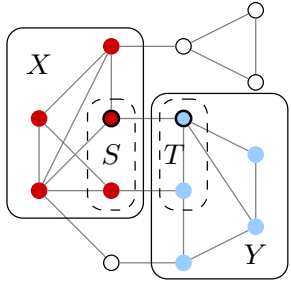

\noindent
Le and Le~\cite{LL19} proved the following two lemmas. The first lemma
shows that rules R1--R3 are safe and 
is not difficult to verify, whereas 
the second lemma is proven by a reduction to {\sc $2$-Satisfiability}. We slightly changed the formulation of their lemma so that it can be applied to the case where the graph $G$ may also have a valid red-blue colouring in which not every connected component of $G-(X\cup Y)$ is monochromatic.

\begin{lemma}[\cite{LL19}]\label{l-ll1}
Let $G$ be a graph with a starting pair $(S',T')$. Assume that exhaustively applying rules R1--R3 did not lead to a no-answer but to a $4$-tuple $(S,T,X,Y)$. The following holds:
\begin{itemize}
\item [(i)] $S'\subseteq S\subseteq X$ and $T'\subseteq T\subseteq Y$ and $X\cap Y=\emptyset$,\\[-10pt]
\item [(ii)] $G$ has a valid red-blue $(S',T',S',T')$-colouring  if and only if $G$ has a valid red-blue $(S,T,X,Y)$-colouring (note that the backward implication holds by definition), and\\[-10pt]
\item [(iii)] every vertex in $V\setminus (X\cup Y)$ has no neighbour in $S\cup T$; at most one neighbour in $X\setminus S$ and at most one neighbour in $Y\setminus T$.
\end{itemize}
Moreover, the $4$-tuple $(S,T,X,Y)$ can be obtained in polynomial time.
\end{lemma}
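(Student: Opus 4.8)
The plan is to prove the three properties (i)--(iii) by tracking how the sets $S,T,X,Y$ evolve during the (at most $n$) applications of R1--R3, the key point being that the rules are \emph{forced}: in every valid red-blue $(S',T',S',T')$-colouring the vertices placed in $X$ (resp.\ $Y$) must be red (resp.\ blue), and those placed in $S$ (resp.\ $T$) must lie in the red (resp.\ blue) interface with their interface partner inside $Y$ (resp.\ $X$). For (i), I would argue by induction on the number of rule applications that the inclusions $S'\subseteq S\subseteq X$, $T'\subseteq T\subseteq Y$ and the disjointness $X\cap Y=\emptyset$ are invariants. Initially $X=S=S'$ and $Y=T=T'$, and these hold because $(S',T')$ is a starting pair. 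Each application of R2 moves a vertex $v$ out of $Z=V\setminus(X\cup Y)$ into $X$ (and, when it also adds $v$ to $S$, the matched vertex $w$ already lies in $Y$ and is added to $T\subseteq Y$); since $v\in Z$ we have $v\notin Y$, so disjointness is preserved, and the inclusions are preserved because the sets only grow. Rule R3 is symmetric. This part is routine bookkeeping.

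For (iii), I would use that the final $4$-tuple is obtained only when no rule applies anymore. For a vertex $v\in V\setminus(X\cup Y)=Z$: a neighbour in $S$ would trigger R2, a neighbour in $T$ would trigger R3, two neighbours in $X\setminus S$ would trigger R2, and two neighbours in $Y\setminus T$ would trigger R3. Since none of these fire, $v$ has no neighbour in $S\cup T$ and at most one neighbour in each of $X\setminus S$ and $Y\setminus T$, which is exactly (iii). The running time follows because each successful application of R2 or R3 strictly decreases $|Z|$, so there are at most $n$ applications, each checkable in polynomial time by scanning adjacencies; applying R1 is a polynomial-time test.

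The core is (ii); its backward direction is immediate from $S'\subseteq S\subseteq X$ and $T'\subseteq T\subseteq Y$, so that any valid $(S,T,X,Y)$-colouring already satisfies the weaker $(S',T',S',T')$-requirements. For the forward direction I would fix an arbitrary valid $(S',T',S',T')$-colouring $c$ and prove, again by induction on the rule applications, the stronger invariant that $c$ colours every vertex of $X$ red, every vertex of $Y$ blue, places every vertex of $S$ in the red interface with its unique blue neighbour inside $Y$, and places every vertex of $T$ in the blue interface with its unique red neighbour inside $X$. The base case is the definition of an $(S',T',S',T')$-colouring together with the fact that the matched neighbour in the starting pair realises the interface edge. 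For the inductive step I would verify that R2 (and symmetrically R3) preserves this invariant: if $v\in Z$ is adjacent to some $s\in S$, then, since $s$'s unique blue neighbour already lies in $Y$ and $v\notin Y$, colouring $v$ blue would give $s$ a second blue neighbour, so $c$ must colour $v$ red; if instead $v$ has two neighbours in $X\setminus S$, both red in $c$, then colouring $v$ blue would give $v$ two red neighbours, again impossible, so $v$ is red. When R2 additionally records a matched $w\in Y$, validity of $c$ forces $w$ to be the unique blue neighbour of $v$ and $v$ the unique red neighbour of $w$, so the interface memberships are correct. Applying this to the terminal tuple shows $c$ is a valid $(S,T,X,Y)$-colouring.

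The main obstacle, and the place that needs the most care, is showing that R1 is safe, i.e.\ that a no-answer is returned only when no valid $(S',T',S',T')$-colouring exists. Here I would assume such a colouring $c$ exists, carry the same invariant up to the step at which R1 would fire, and derive a contradiction in each of the four cases: a vertex adjacent to $S$ and to $T$ is simultaneously forced red and blue; a vertex adjacent to $S$ (forced red) and to two vertices of $Y\setminus T$ (both blue) is a red vertex with two blue neighbours; the third case is symmetric; and a vertex adjacent to two vertices of $X\setminus S$ (forced red) and to two vertices of $Y\setminus T$ (both blue) is again a red vertex with two blue neighbours. The delicate point throughout is the asymmetry in R2/R3 between a \emph{single} neighbour in $X\setminus S$ (which forces nothing, since $v$ could be that vertex's unique blue neighbour) and either a neighbour in $S$ or \emph{two} neighbours in $X\setminus S$ (which do force the colour); getting this distinction right is exactly what makes the forcing invariant go through.
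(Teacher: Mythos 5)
Your proposal is correct: the invariant-based forcing argument for (i) and (ii), the termination argument for (iii), and the case analysis showing R1 is safe are all sound, and you correctly isolate the subtle point that a single neighbour in $X\setminus S$ forces nothing while a neighbour in $S$ or two neighbours in $X\setminus S$ do. Note, however, that the paper does not prove this lemma at all---it imports it from Le and Le~\cite{LL19} with the remark that it ``is not difficult to verify''---so your write-up simply supplies, correctly, the standard verification the paper omits.
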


\begin{lemma}[\cite{LL19}]\label{l-ll2}
Let $G$ be a graph with a starting pair $(S',T')$. Assume that exhaustively applying rules R1--R3 did not lead to a no-answer but to a $4$-tuple $(S,T,X,Y)$. It can be decided in $O(mn)$ time if $G$ has a valid red-blue $(S,T,X,Y)$-colouring (or equivalently, a valid red-blue $(S',T',S',T')$-colouring) in which every connected component of $G-(X\cup Y)$ is monochromatic.
\end{lemma}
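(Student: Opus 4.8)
The plan is to follow the text's hint and reduce the question to {\sc $2$-Satisfiability}, exploiting the structural information recorded in Lemma~\ref{l-ll1}(iii). Write $Z=V\setminus(X\cup Y)$. In any colouring we seek, the vertices of $X$ are red and those of $Y$ are blue, and we only allow colourings in which every connected component of $G[Z]$ is monochromatic. Since two adjacent vertices of $Z$ lie in the \emph{same} component of $G[Z]$, every edge inside $Z$ joins two vertices of equal colour and can never violate validity. I would therefore introduce one Boolean variable $x_C$ for each connected component $C$ of $G[Z]$, declaring $x_C$ to be true exactly when $C$ is coloured red, and express validity entirely through clauses over these variables.

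First I would dispose of the constraints that do not involve $Z$. By Lemma~\ref{l-ll1}(iii) no vertex of $Z$ has a neighbour in $S\cup T$, so each vertex of $S$ (red) has its blue neighbours only inside $Y$ and each vertex of $T$ (blue) has its red neighbours only inside $X$; I would hence check directly that every vertex of $S$ has exactly one neighbour in $Y$ and every vertex of $T$ has exactly one neighbour in $X$, returning {\tt no} otherwise. Next I would observe that validity is automatically met at every $v\in Z$: by Lemma~\ref{l-ll1}(iii), $v$ has at most one neighbour in $X$ and at most one in $Y$, while all its neighbours in $Z$ share its colour, so $v$ has at most one oppositely coloured neighbour regardless of how the variables are set.

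It remains to encode validity at the vertices of $X\setminus S$ and $Y\setminus T$. A vertex $u\in X\setminus S$ is red and, by definition of $S$, has no neighbour in $Y$, so its only possible blue neighbours lie in blue components of $G[Z]$; validity at $u$ is exactly the requirement that their number is at most one. Letting $C_1,\dots,C_k$ be the components meeting $N(u)$, I would add a unit clause forcing $C_i$ red whenever $u$ has at least two neighbours in $C_i$, together with a clause $x_{C_i}\vee x_{C_j}$ for each pair $i\neq j$ forbidding two blue components in $N(u)$. The symmetric treatment of each $w\in Y\setminus T$ (blue, with no neighbour in $X$) yields unit clauses $\neg x_C$ and pairwise clauses $\neg x_{C_i}\vee\neg x_{C_j}$. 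A valid red-blue $(S,T,X,Y)$-colouring with all components of $G[Z]$ monochromatic then corresponds precisely to a satisfying assignment, so by Observation~\ref{o} it suffices to solve this {\sc $2$-Satisfiability} instance in time linear in its size. For a single vertex the pairwise clauses number $O(\deg(u)^2)$, so the instance has $O(\sum_u\deg(u)^2)=O(mn)$ clauses and $O(n)$ variables, giving the claimed $O(mn)$ bound.

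The main obstacle is not the reduction itself but justifying that it captures validity \emph{exactly}. The crucial structural point, which must be argued carefully from Lemma~\ref{l-ll1}(iii), is that once the components of $G[Z]$ are monochromatic, every remaining constraint is local to a single vertex of $X\cup Y$ and involves only the components in its neighbourhood, so that the sole genuine interactions are at-most-one conditions expressible in $2$-SAT. Verifying that the at-most-one encoding is both sound and complete, and that the preprocessing at $S$ and $T$ truly covers all the non-variable cases, is the part that needs the most care.
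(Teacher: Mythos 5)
The paper does not prove this lemma itself---it is quoted from Le and Le~\cite{LL19}, with only the remark that it is proven by a reduction to {\sc $2$-Satisfiability}---and your reconstruction follows exactly that route: one variable per connected component of $G-(X\cup Y)$, validity reduced via Lemma~\ref{l-ll1}(iii) to at-most-one constraints localized at the vertices of $X\setminus S$ and $Y\setminus T$, plus direct pre-checks at $S$ and $T$. Your encoding is sound and complete and the $O(mn)$ accounting is correct, so this matches the intended proof.
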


\noindent
Finally, we need one more result from the literature (which has been strengthened in~\cite{CS16}).

\begin{theorem}[\cite{HP10}]\label{t-hp}
A graph $G=(V,E)$ on $n$ vertices is $P_6$-free if and only if each connected induced subgraph of $G$ 
contains a dominating induced $C_6$ or 
a dominating (not necessarily induced) complete bipartite graph.
Moreover, we can find such a dominating subgraph of $G$ in $O(n^3)$ time.
\end{theorem}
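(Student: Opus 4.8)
The plan is to prove the two implications of the equivalence separately and then to address the running time.

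I would dispose of the backward implication by contraposition. Suppose $G$ contains an induced $P_6$ on vertices $v_1,\dots,v_6$, and take the connected induced subgraph $H=G[\{v_1,\dots,v_6\}]$, which is isomorphic to $P_6$. Being acyclic on six vertices, $H$ contains no induced $C_6$. Its only complete bipartite subgraphs are single edges and the paths $K_{1,2}$ on three consecutive vertices; the closed neighbourhood of such a vertex set covers at most five of the six vertices and hence cannot dominate $H$. Thus $H$ is a connected induced subgraph possessing neither type of dominating subgraph, which is precisely the negation of the right-hand side.

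For the forward implication I would first reduce to the single claim that every \emph{connected} $P_6$-free graph $G$ contains a dominating induced $C_6$ or a dominating complete bipartite subgraph; this suffices since every connected induced subgraph of a $P_6$-free graph is again connected and $P_6$-free. The key tool is that a shortest path between two vertices is always induced, so $P_6$-freeness forces $\diam(G)\le 4$. Fixing a vertex $v$ and its breadth-first layers $L_0=\{v\},L_1,L_2,L_3,L_4$ therefore captures all of $V(G)$, and I would first clear away the degenerate cases (a universal vertex, or a single edge whose closed neighbourhood already dominates $G$), where a dominating star or edge-based complete bipartite subgraph is immediate.

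The structural core is to attempt to build a dominating complete bipartite subgraph from the layers, and to show that the sole obstruction yields a dominating induced $C_6$. Concretely, I would take a complete bipartite subgraph with parts $A\subseteq L_i$ and $B\subseteq L_{i+1}$ chosen maximal with respect to the set of vertices it dominates, and examine a hypothetical undominated vertex $u$. Using that $u$ reaches $A\cup B$ along a short path and that $G$ is $P_6$-free, the neighbourhoods of $u$ and of $A\cup B$ are forced into a rigid configuration from which six vertices inducing a $C_6$ can be read off. The step I expect to be the main obstacle is then to prove that this induced $C_6$ is itself dominating: any vertex it missed would, together with a shortest path reaching the cycle, complete an induced $P_6$, and $P_6$-freeness excludes this. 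That both cases are genuinely needed is witnessed by $G=C_6$, which is $P_6$-free and dominates itself as an induced $C_6$, yet has no dominating complete bipartite subgraph, since all of its complete bipartite subgraphs are stars $K_{1,\le 2}$ and each misses a vertex.

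Finally, for the $O(n^3)$ bound I would make the existence argument constructive. The layering is computed in $O(n+m)$ time, the procedure above emits only a linear number of candidate dominating subgraphs (one per choice of base edge or obstructing vertex), and testing whether a fixed candidate dominates $G$ costs $O(n^2)$ time; hence the whole search runs in $O(n^3)$ time.
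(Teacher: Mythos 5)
This theorem is imported by the paper from~\cite{HP10} as a black box, so there is no in-paper proof to compare against; judged on its own terms, your proposal has a genuine gap. The backward implication is fine: $P_6$ itself has no induced $C_6$, its only complete bipartite subgraphs are $K_{1,1}$'s and $K_{1,2}$'s on consecutive vertices, and each of these has closed neighbourhood of size at most five. But the forward implication --- the entire content of the theorem --- is not actually argued. The sentence ``the neighbourhoods of $u$ and of $A\cup B$ are forced into a rigid configuration from which six vertices inducing a $C_6$ can be read off'' is a statement of intent, not a proof; no case analysis is performed, and it is exactly here that all the combinatorial work of van~'t~Hof and Paulusma lives (their argument proceeds via a careful analysis of a minimum connected dominating set, not via BFS layers, and runs to several pages).

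Moreover, the one concrete mechanism you do propose for the $C_6$ case is false as stated. You claim that if the induced $C_6$ misses a vertex, then that vertex together with a shortest path to the cycle completes an induced $P_6$. Consider the graph on $\{w,x,c_1,\dots,c_6\}$ where $c_1\cdots c_6$ is a $C_6$, $x$ is adjacent to $c_1,c_3,c_5$, and $w$ is adjacent only to $x$. This graph is $P_6$-free (any induced path through $w$ must continue $w,x,c_i$ with $i$ odd, and then dies after one more step because both neighbours of that $c_i$ on the cycle lead to vertices adjacent to $x$), yet the induced $C_6$ on $c_1,\dots,c_6$ does not dominate $w$ and no induced $P_6$ arises. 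The theorem survives because this graph has a dominating star $K_{1,4}$ centred at $x$, but that shows the two cases must be interleaved in the analysis rather than handled by the local argument you sketch; one cannot first fix an arbitrary induced $C_6$ and then argue it dominates. Since the $O(n^3)$ bound is derived from making this missing argument constructive, it inherits the gap.
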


\section{The Proofs of Theorem~\ref{t-main} and Corollary~\ref{c-p6}}\label{s-main}

We first prove Theorem~\ref{t-main}, which we restate below.

\medskip
\noindent
{\bf Theorem~\ref{t-main} (restated).}
{\it  For an integer~$r$, {\sc Matching Cut} is polynomial-time solvable for graphs of radius at most $r$ if $r\leq 2$ and \NP-complete for graphs of radius at most $r$ if $r\geq 3$.}

\begin{proof}
The case where $r\geq 3$ follows from Theorem~\ref{t-diameter} after observing that the class of graphs of diameter at most~$3$ is contained in the class of graphs of radius at most~$3$. 
So assume now that $r\leq 2$. 
Let $G$ be a graph of radius at most $r$.
If $r=1$, then $G$ has a vertex that is adjacent to all other vertices. In this case $G$ has a matching cut if and only if $G$ has a vertex of degree~$1$; we can check the latter condition in polynomial time.
From now on, assume that $r=2$. Then $G$ has a dominating star $H$, say $H$ has centre $u$ and leaves $v_1,\ldots,v_s$ for some $s\geq 1$. By Observation~\ref{o} it suffices to check if $G$ has a valid red-blue colouring.

We first check if $G$ has a valid red-blue colouring in which $V(H)$ is monochromatic. By Lemma~\ref{l-dom2} this can be done in polynomial time. Suppose we find no such red-blue colouring. Then we may assume without loss of generality that a valid red-blue colouring of $G$ (if it exists) colours $u$ red and exactly one of $v_1,\ldots,v_s$ blue.
That is, $G$ has a valid red-blue colouring if and only if $G$ has a valid red-blue $(\{u\},\{v_i\},\{u\},\{v_i\})$-colouring for some $i\in \{1,\ldots,s\}$.
We consider all $O(n)$ options of choosing which $v_i$ is coloured blue. 

For each option we do as follows. Let $v_i$ be the vertex of $v_1,\ldots,v_s$ that we coloured blue. We define
the starting pair $(S',T')$ with $S'=\{u\}$ and $T'=\{v_i\}$ and apply rules R1--R3 exhaustively. The latter takes polynomial time by Lemma~\ref{l-ll1}. If this exhaustive application leads to a no-answer, then by Lemma~\ref{l-ll1} we may discard the option. Suppose we obtain a $4$-tuple $(S,T,X,Y)$. By again applying Lemma~\ref{l-ll1}, we find that
$G$ has a valid red-blue $(\{u\},\{v_i\},\{u\},\{v_i\})$-colouring if and only if $G$ has a valid red-blue $(S,T,X,Y)$-colouring. By R2 and the fact that
$u\in S'\subseteq S$ we find that $\{v_1,\ldots,v_s\}\setminus \{v_i\}$ belongs to~$X$.

Suppose that $G$ has a valid red-blue $(S,T,X,Y)$-colouring~$c$ such that $G-(X\cup Y)$ has a connected component~$D$ that is not monochromatic. Then $D$ must contain an edge $uv$, where $u$ is coloured red and $v$ is coloured blue. 
Note that $v$ cannot be adjacent to $v_i$, as otherwise $v$ would have been in $Y$ by R3 (since $v_i\in T'\subseteq T$). As $H$ is dominating, this means that $v$ must be adjacent to a vertex $w\in V(H)\setminus \{v_i\}=\{u,v_1,\ldots,v_s\}\setminus \{v_i\}$. 
As $u\in S'\subseteq S\subseteq X$ and $\{v_1,\ldots,v_s\}\setminus \{v_i\}\subseteq X$, we find that $w\in X$ by R2 and thus will be coloured red. However, now $v$ being coloured blue is adjacent to two red vertices (namely $u$ and $w$), contradicting the validity of $c$. 

From the above we conclude that for any valid $(S,T,X,Y)$-colouring (if it exists), every connected component $G-(X,Y)$ is monochromatic. Hence, we can apply Lemma~\ref{l-ll2} to find in polynomial time whether or not $G$ has a valid red-blue $(S,T,X,Y)$-colouring, or equivalently, if $G$ has a valid red-blue  $(\{u\},\{v_i\},\{u\},\{v_i\})$-colouring. 

The correctness of our algorithm follows from the above arguments.
As we branch $O(n)$ times and each branch takes polynomial time to process, the total running time of our algorithm is polynomial. \qed
\end{proof}

\noindent
We now prove Corollary~\ref{c-p6}, which we restate below.

\medskip
\noindent
{\bf Corollary~\ref{c-p6} (restated).}
{\it {\sc Matching Cut} is polynomial-time solvable for $P_6$-free graphs.}

\begin{proof}
Let $G$ be a connected $P_6$-free graph. By Theorem~\ref{t-hp}, we find that $G$ has a  dominating induced $C_6$ or 
a dominating (not necessarily induced) complete bipartite graph $K_{r,s}$. By Observation~\ref{o} it suffices to check if $G$ has a valid red-blue colouring.

If $G$ has a dominating induced $C_6$, then $G$ has domination number at most~$6$. In that case we apply Lemma~\ref{l-dom} to find in polynomial time if $G$ has a valid red-blue colouring.
Suppose that $G$ has a dominating complete bipartite graph $H$ with partition classes $\{u_1,\ldots,u_r\}$ and
$\{v_1,\ldots,v_s\}$. We may assume without loss of generality that $r\leq s$.

If $r\geq 2$ and $s\geq 3$, then it is readily seen that applying rules R1--R3 on any starting pair $(\{u_i\},\{v_j\})$ yields a no-answer. Hence, $V(H)$ is monochromatic for any valid red-blue colouring of $G$. 
This means that we can check in polynomial time by Lemma~\ref{l-dom2} if $G$ has a valid red-blue colouring.

Now assume that $r=1$ or $s\leq 2$. In the first case, $G$ has a (not necessarily induced) dominating star and thus $G$ has radius~2, and we apply Theorem~\ref{t-main}. In the second case, $r\leq s\leq 2$, and thus
$G$ has domination number at most~$4$, and we apply Lemma~\ref{l-dom}. Hence, in both cases, we find in polynomial time whether or not $G$ has a valid red-blue colouring. \qed
\end{proof}

\section{Extending Corollary~\ref{c-p6}}\label{s-extension}

In this section we slightly generalize the framework of Le and Le~\cite{LL19} in order to obtain new algorithms for {\sc Matching Cut} on $H$-free graphs. 

First we slightly generalize the definition of a starting pair $(S',T')$ of a graph~$G$. 
We still let $S'$ and $T'$ be two non-empty subsets of $V$ with $S'\cap T'=\emptyset$. However, now we only require that every vertex of $S'$ is adjacent to at most one vertex of $T'$, and vice versa, whilst at least one vertex of $S'$ must be adjacent to a vertex of~$T'$. 
We let $S''\subseteq S'$ consists of those vertices of $S'$ that have exactly one neighbour in $T'$.
Similarly, we let $T''$ consists of those vertices of $T'$ that  have exactly one neighbour in $S'$.
We call $(S',T')$ a {\it generalized starting pair} of $G$ with \textit{core} $(S'',T'')$. Note that by definition, $|S''|=|T''|\geq 1$. See Fig.~\ref{f-gen_starting_pair} for an example.

When we apply rules R1--R3, we first initiate by setting $S:=S''; X:=S'$; $T:=T''$; and $Y:=T'$; see also Fig.~\ref{f-gen_starting_pair}.
The following two lemmas can be readily checked by mimicking the proofs of Lemmas~\ref{l-ll1} and~\ref{l-ll2} given in~\cite{LL19}.

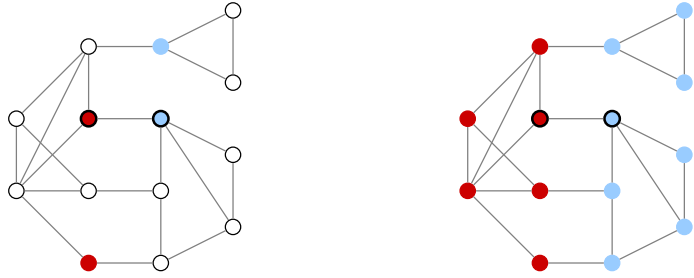
\begin{figure}[t]
\begin{center}
\scalebox{1.2}{\begin{tikzpicture}
%generalized starting pair

\tikzstyle{bvertex}=[thin,circle,inner sep=0.cm, minimum size=1.7mm, fill=lightblue, draw=lightblue]
    \tikzstyle{rvertex}=[thin,circle,inner sep=0.cm, minimum size=1.7mm, fill=nicered,draw=nicered]
    \tikzstyle{b1vertex}=[thick,circle,inner sep=0.cm, minimum size=1.7mm, fill=lightblue, draw=black]
    \tikzstyle{r1vertex}=[thick,circle,inner sep=0.cm, minimum size=1.7mm, fill=nicered,draw=black]
    \tikzstyle{vertex}=[thin,circle,inner sep=0.cm, minimum size=1.7mm, fill=none, draw=black]
    \tikzstyle{hedge}=[thin, draw = gray]
    \tikzstyle{cutedge}=[ultra thick, draw=OrangeRed]
    \tikzstyle{rahmen1}=[rounded corners = 5pt,draw, dashed, minimum width = 15pt, minimum height = 35pt]
    \tikzstyle{rahmen2}=[rounded corners = 5pt,draw, , minimum width = 43pt, minimum height = 60pt]
    \tikzstyle{rahmen3}=[rounded corners = 5pt,draw, dashed, minimum width = 15pt, minimum height = 35pt]
    \tikzstyle{rahmen4}=[rounded corners = 5pt,draw, , minimum width = 43pt, minimum height = 58.5pt]

 \def\k{0.8} 

	\node[vertex] (a1) at (0,\k/2){};
	\node[vertex] (a2) at (0,-\k/2){};
	\node[vertex] (a3) at (\k,1.5*\k){};
	\node[r1vertex] (a4) at (\k,\k/2){};
	\node[rvertex] (a5) at (\k,-3*\k/2){};
	\node[vertex] (a5a) at (\k,-\k/2){};
	\node[bvertex] (a6) at (2*\k,1.5*\k){};
	\node[b1vertex] (a7) at (2*\k,0.5*\k){};
	\node[vertex] (a8a) at (2*\k,-0.5*\k){};
	\node[vertex] (a8) at (2*\k,-1.5*\k){};
	\node[vertex] (a9) at (3*\k,2*\k){};
	\node[vertex] (a10) at (3*\k,1*\k){};
	\node[vertex] (a11) at (3*\k,0*\k){};
	\node[vertex] (a12) at (3*\k,-1*\k){};
	
	\draw[hedge](a1)--(a2);
	\draw[hedge](a1)--(a3);
	\draw[hedge](a1)--(a5a);
	\draw[hedge](a2)--(a3);
	\draw[hedge](a2)--(a4);
	\draw[hedge](a2)--(a5);
	\draw[hedge](a2)--(a5a);
	\draw[hedge](a3)--(a4);
	\draw[hedge](a3)--(a6);
	\draw[hedge](a4)--(a7);
	\draw[hedge](a5)--(a8);
	\draw[hedge](a5a)--(a8a);
	\draw[hedge](a6)--(a9);
	\draw[hedge](a6)--(a10);
	\draw[hedge](a7)--(a8a);
	\draw[hedge](a7)--(a11);
	\draw[hedge](a7)--(a12);
	\draw[hedge](a8)--(a12);
	\draw[hedge](a8a)--(a8);
	\draw[hedge](a9)--(a10);
	\draw[hedge](a11)--(a12);
	
	\begin{scope}[shift = ({5,0})]
		\node[rvertex] (a1) at (0,\k/2){};
	\node[rvertex] (a2) at (0,-\k/2){};
	\node[rvertex] (a3) at (\k,1.5*\k){};
	\node[r1vertex] (a4) at (\k,\k/2){};
	\node[rvertex] (a5) at (\k,-3*\k/2){};
	\node[rvertex] (a5a) at (\k,-\k/2){};
	\node[bvertex] (a6) at (2*\k,1.5*\k){};
	\node[b1vertex] (a7) at (2*\k,0.5*\k){};
	\node[bvertex] (a8a) at (2*\k,-0.5*\k){};
	\node[bvertex] (a8) at (2*\k,-1.5*\k){};
	\node[bvertex] (a9) at (3*\k,2*\k){};
	\node[bvertex] (a10) at (3*\k,1*\k){};
	\node[bvertex] (a11) at (3*\k,0*\k){};
	\node[bvertex] (a12) at (3*\k,-1*\k){};
	
	\draw[hedge](a1)--(a2);
	\draw[hedge](a1)--(a3);
	\draw[hedge](a1)--(a5a);
	\draw[hedge](a2)--(a3);
	\draw[hedge](a2)--(a4);
	\draw[hedge](a2)--(a5);
	\draw[hedge](a2)--(a5a);
	\draw[hedge](a3)--(a4);
	\draw[hedge](a3)--(a6);
	\draw[hedge](a4)--(a7);
	\draw[hedge](a5)--(a8);
	\draw[hedge](a5a)--(a8a);
	\draw[hedge](a6)--(a9);
	\draw[hedge](a6)--(a10);
	\draw[hedge](a7)--(a8a);
	\draw[hedge](a7)--(a11);
	\draw[hedge](a7)--(a12);
	\draw[hedge](a8)--(a12);
	\draw[hedge](a8a)--(a8);
	\draw[hedge](a9)--(a10);
	\draw[hedge](a11)--(a12);

	\end{scope}
	
%	\node[rahmen1] at (\k,0){};
%	\node[] at(\k,0){$S$};
%	\node[rahmen2] at (\k/2,0.35){};
%	\node[] at(0,1){$X$};
%	\node[rahmen3] at (2*\k,0){};
%	\node[] at(2*\k,0){$T$};
%	\node[rahmen4] at (2.5*\k,-0.35){};
%	\node[] at(3*\k,-1.1){$Y$};

\end{tikzpicture}}
\caption{Left: an example of a generalized starting pair $(S',T')$ with core $(S'',T'')$, where
$S'$ consists of the two red vertices, $S''$ consists of the thick red vertex, $T'$ consists of the two blue vertices and $T''$ consists of the thick blue vertex. Right: the application of rules R1-R3 on $(S',T')$.
Note that the resulting four tuple $(S,T,X,Y)$ immediately results in a valid red-blue colouring. Hence, having some  vertices in $S'\setminus S''$ and $T'\setminus T$, which are adjacent to a vertex with an opposite colour, can help significantly.}\label{f-gen_starting_pair} 
\end{center}
\end{figure}

\begin{lemma}\label{l-ll1b}
Let $G$ be a graph with a generalized starting pair $(S',T')$ with core $(S'',T'')$. Assume that exhaustively applying rules R1--R3 did not lead to a no-answer but to a $4$-tuple $(S,T,X,Y)$. The following holds:
\begin{itemize}
\item [(i)] $S''\subseteq S\subseteq X$; $S'\subseteq X$; $T''\subseteq T\subseteq Y$; $T'\subseteq Y$; and $X\cap Y=\emptyset$,\\[-10pt]
\item [(ii)] $G$ has a valid red-blue $(S'',T'',S',T')$-colouring  if and only if $G$ has a valid red-blue $(S,T,X,Y)$-colouring, and\\[-10pt]
\item [(iii)] every vertex in $V\setminus (X\cup Y)$ has no neighbour in $S\cup T$; at most one neighbour in $X\setminus S$ and at most one neighbour in $Y\setminus T$.
\end{itemize}
Moreover, the $4$-tuple $(S,T,X,Y)$ can be obtained in polynomial time.
\end{lemma}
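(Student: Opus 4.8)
The plan is to follow the proof of Lemma~\ref{l-ll1} in~\cite{LL19} essentially verbatim, tracking the single structural change introduced by the generalized starting pair: whereas in the original setting the rules are initialized with $S=X=S'$ and $T=Y=T'$, here we initialize with $S=S''\subseteq X=S'$ and $T=T''\subseteq Y=T'$, so that the seeded vertices $S'\setminus S''$ and $T'\setminus T''$ sit inside $X\setminus S$ and $Y\setminus T$ from the start. I would first record the invariants maintained throughout any execution of R1--R3: the sets $S,T,X,Y$ only ever grow, every insertion into $S$ (resp.\ $T$) is accompanied by an insertion into $X$ (resp.\ $Y$), and no vertex is inserted into both $X$ and $Y$. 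Since initially $S''\subseteq S'=X$, $T''\subseteq T'=Y$, and $S'\cap T'=\emptyset$ by the definition of a generalized starting pair, monotonicity immediately yields the containments $S''\subseteq S\subseteq X$, $S'\subseteq X$, $T''\subseteq T\subseteq Y$, $T'\subseteq Y$ of~(i), provided $X\cap Y=\emptyset$ is preserved.

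For $X\cap Y=\emptyset$ I would argue as in~\cite{LL19}: a vertex $v\in Z$ is put into $X$ by R2 only when it has a neighbour in $S$ or two neighbours in $X\setminus S$, and into $Y$ by R3 only when it has a neighbour in $T$ or two neighbours in $Y\setminus T$; if both preconditions held simultaneously, then one of the clauses of R1 would have fired and the procedure would have returned \texttt{no}, so no $v$ can enter both sets. Property~(iii) is then exactly the negation of the preconditions of R2 and R3: once the procedure halts without a no-answer no rule applies, so every $v\in V\setminus(X\cup Y)$ has no neighbour in $S\cup T$, at most one neighbour in $X\setminus S$, and at most one neighbour in $Y\setminus T$. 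The polynomial running time follows because each successful application of R2 or R3 removes a vertex from $Z$, bounding the number of applications by $n$, and each application together with the R1 test is checkable in polynomial time.

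The substantive part, and the only genuinely new point relative to~\cite{LL19}, is the forward direction of~(ii). The backward implication holds by containment: any valid red-blue $(S,T,X,Y)$-colouring has red set $\supseteq X\supseteq S'$, blue set $\supseteq Y\supseteq T'$, red interface $\supseteq S\supseteq S''$ and blue interface $\supseteq T\supseteq T''$, hence is a valid red-blue $(S'',T'',S',T')$-colouring. For the forward implication I would fix a valid red-blue $(S'',T'',S',T')$-colouring $c$ and prove, by induction on the order in which the rules fire, that $c$ colours every vertex of $X$ red, every vertex of $Y$ blue, every vertex of $S$ into the red interface and every vertex of $T$ into the blue interface. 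The base case is the initialization: $c$ colours $S'$ red and $T'$ blue by hypothesis, and since the edges between $S'$ and $T'$ form a matching, each vertex of $S''$ is matched to a vertex of $T''$, placing $S''$ in the red interface and $T''$ in the blue interface of $c$. For the inductive step one checks each rule against the validity of $c$: if R2 moves $v$ to $X$ because $v$ has a neighbour $s\in S$, then $s$ is red with a blue neighbour already in $T$, so $v$ blue would give $s$ two blue neighbours, forcing $v$ red; if instead $v$ has two neighbours in $X\setminus S$, both red, then $v$ blue would have two red neighbours, again forcing $v$ red; and if such a $v$ has a unique neighbour $w\in Y$, then $w$ is blue and $v,w$ witness each other's interface membership, justifying adding them to $S$ and $T$. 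Rule R3 is symmetric, and the same case analysis re-derives the safety of R1, since each of its no-answer conditions would force some vertex to be simultaneously red and blue under $c$. The one place the generalization bites is the bookkeeping of the seeded vertices $S'\setminus S''\subseteq X\setminus S$ and $T'\setminus T''\subseteq Y\setminus T$: under $c$ these are red (resp.\ blue) but carry no interface obligation, which is precisely what the definition of an $(S,T,X,Y)$-colouring permits and what the ``two neighbours in $X\setminus S$'' (resp.\ $Y\setminus T$) clauses of R2 and R3 exploit; once this is checked, the inductive step proceeds exactly as in~\cite{LL19}.
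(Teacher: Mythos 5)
Your proposal is correct and matches the paper's intent exactly: the paper offers no written proof of Lemma~\ref{l-ll1b}, stating only that it ``can be readily checked by mimicking the proofs of Lemmas~\ref{l-ll1} and~\ref{l-ll2} given in~\cite{LL19}'', which is precisely what you do, with the right attention to the only new point (the seeded vertices $S'\setminus S''$ and $T'\setminus T''$ entering $X\setminus S$ and $Y\setminus T$ at initialization). Your inductive verification of the forward direction of (ii) and the derivation of (iii) from the negated rule preconditions are both sound.
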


\begin{lemma}\label{l-ll2b}
Let $G$ be a graph with a generalized starting pair $(S',T')$ with core $(S'',T'')$. Assume that exhaustively applying rules R1--R3 did not lead to a no-answer but to a $4$-tuple $(S,T,X,Y)$. It can be decided in $O(mn)$ time if $G$ has a valid red-blue $(S,T,X,Y)$-colouring (or equivalently, a valid red-blue $(S'',T'',S',T')$-colouring) in which every connected component of $G-(X\cup Y)$ is monochromatic.
\end{lemma}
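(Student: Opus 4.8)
The plan is to reprove, in the setting of generalized starting pairs, the reduction to {\sc $2$-Satisfiability} that Le and Le use for Lemma~\ref{l-ll2}, and to observe that their argument only ever appeals to the three structural properties (i)--(iii) of the returned $4$-tuple. Since Lemma~\ref{l-ll1b} guarantees exactly these properties for a $4$-tuple $(S,T,X,Y)$ obtained from a generalized starting pair, the entire reduction transfers essentially verbatim; the only genuine difference from \cite{LL19} is the modified initialization $S:=S''$, $X:=S'$, $T:=T''$, $Y:=T'$, which is already absorbed into Lemma~\ref{l-ll1b}. By Lemma~\ref{l-ll1b}(ii) it moreover suffices to decide the existence of a valid red-blue $(S,T,X,Y)$-colouring with the stated monochromaticity property.

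First I would fix the colouring on $X\cup Y$, colouring every vertex of $X$ red and every vertex of $Y$ blue (consistent with Lemma~\ref{l-ll1b}(i)), and, since we search only for colourings in which each connected component of $G-(X\cup Y)$ is monochromatic, introduce one Boolean variable $c_C$ per component $C$ of $G[Z]$, where $Z=V\setminus(X\cup Y)$, with the convention that $c_C$ is true exactly when $C$ is coloured red. The key use of Lemma~\ref{l-ll1b}(iii) is to show that the vertices of $Z$ can never spoil validity on their own: a vertex $z\in Z$ has no neighbour in $S\cup T$ and at most one neighbour in each of $X\setminus S$ and $Y\setminus T$, so if $z$ is blue its only possible red neighbour is its unique neighbour in $X\setminus S$ (and symmetrically if $z$ is red), while all of $z$'s neighbours inside $G[Z]$ lie in its own, monochromatic, component. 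Hence every validity constraint is localized at the vertices of $X\cup Y$.

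I would then translate validity into clauses of size at most two. There are static constraints involving no variable: by (iii) no vertex of $S\cup T$ has a neighbour in $Z$, so each $s\in S$ must have exactly one neighbour in $Y$ and each $t\in T$ exactly one neighbour in $X$, and if some such vertex has two or more we return \texttt{no}. By the definitions of $S$ and $T$, every vertex of $X\setminus S$ has no neighbour in $Y$ and every vertex of $Y\setminus T$ has no neighbour in $X$; thus the only possible blue neighbours of an $x\in X\setminus S$ come from blue components adjacent to $x$, and the only possible red neighbours of a $y\in Y\setminus T$ come from red components adjacent to $y$. The requirement that $x$ have at most one blue neighbour then becomes a unit clause $c_C$ forcing any component $C$ that meets $x$ in two or more vertices to be red, together with a clause $c_C\vee c_{C'}$ forbidding two components $C,C'$ adjacent to $x$ from both being blue; symmetrically, for $y\in Y\setminus T$ one adds a unit clause $\overline{c_C}$ for any component meeting $y$ at least twice and a clause $\overline{c_C}\vee\overline{c_{C'}}$ forbidding two components adjacent to $y$ from both being red. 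Then $G$ has the desired colouring if and only if this instance is satisfiable, and a satisfying assignment yields the colouring directly.

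The step I expect to be the main obstacle is the careful verification that these clauses capture validity exactly, namely that nothing is missed and nothing is over-constrained. In particular one must check that the interface conditions requiring $S$ to lie in the red interface and $T$ in the blue interface are automatic, since every $s\in S$ already has a blue neighbour in $Y$ and every $t\in T$ a red neighbour in $X$; this is exactly where the monochromaticity assumption does the work, as it is what reduces every ``at most one'' condition to pairwise clauses over the component variables. Once correctness is settled, the running time follows as in \cite{LL19}: the instance has $O(n)$ variables, and its construction together with a linear-time satisfiability test fits within $O(mn)$ time. Since every structural ingredient used here is supplied by Lemma~\ref{l-ll1b} in precisely the same form in which Lemma~\ref{l-ll1} supplies it for ordinary starting pairs, the proof is a direct adaptation of the one in \cite{LL19}.
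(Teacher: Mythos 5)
Your proposal is correct and follows essentially the same route as the paper, which gives no explicit argument but states that Lemma~\ref{l-ll2b} ``can be readily checked by mimicking'' the proof of Lemma~\ref{l-ll2} in~\cite{LL19}, i.e.\ the reduction to {\sc $2$-Satisfiability}. You carry out exactly that reduction, correctly observing that it only relies on properties (i)--(iii) supplied by Lemma~\ref{l-ll1b}, so your write-up is a faithful (and more detailed) instantiation of the intended proof.
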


\noindent
We are now ready to prove the main result of this section. 

\begin{theorem}\label{t-h}
Let $H$ be a graph. If {\sc Matching Cut} is polynomial-time solvable for $H$-free graphs, then it is so for $(H+P_3)$-free graphs.
\end{theorem}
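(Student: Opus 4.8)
The plan is to take a connected $(H+P_3)$-free graph $G$ and reduce the search for a valid red-blue colouring to polynomially many calls to the assumed polynomial-time algorithm for $H$-free graphs, together with applications of Lemmas~\ref{l-ll1b} and~\ref{l-ll2b}. The key structural idea is that in a copy of $H+P_3$, the path $P_3$ is a cheap, highly constrained gadget: if the three vertices of a $P_3$ — say $a-b-c$ — receive a non-monochromatic colouring in a valid red-blue colouring, then the colour pattern across this $P_3$ is very limited, because the middle vertex $b$ may have at most one neighbour of the opposite colour. So first I would branch over the $O(n^3)$ choices of an induced $P_3$ in $G$ (if $G$ has none, then $G$ is $P_3$-free, hence a disjoint union of cliques, and a connected $P_3$-free graph is a single clique, which is trivially handled). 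For each candidate $P_3$ on vertices $a,b,c$, the aim is to decide whether $G$ has a valid red-blue colouring that is \emph{non-monochromatic on this particular $P_3$}.

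The main case split is then on whether some induced $P_3$ of $G$ is bichromatic in a target colouring. If every induced $P_3$ is monochromatic in \emph{some} valid colouring we seek, I claim the colouring is essentially controlled by a small dominating-type structure and can be found directly; more usefully, I would argue as follows. Suppose $G$ has a valid red-blue colouring $c$. If the subgraph induced by the "non-$H$ part" is itself $H$-free we could just call the assumed algorithm, but $G$ need not be $H$-free. The crucial observation is that because $G$ is $(H+P_3)$-free, once we fix a bichromatic induced $P_3$ with vertices $a,b,c$, the set $N[\{a,b,c\}]$ must \emph{dominate} any induced copy of $H$ that is anti-complete to $\{a,b,c\}$; otherwise an induced $H$ lying at distance $\geq 2$ from the $P_3$ would, together with $a,b,c$, form an induced $H+P_3$, a contradiction. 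Hence the part of $G$ not dominated by $N[\{a,b,c\}]$ is $H$-free, and on that part we may invoke the polynomial-time $H$-free algorithm, while the bounded-radius neighbourhood $N[\{a,b,c\}]$ is handled by forming a generalized starting pair and propagating via rules R1--R3.

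Concretely, for each guessed bichromatic induced $P_3$ on $a,b,c$, I would fix the (few) admissible colour patterns on $\{a,b,c\}$ — there are only a constant number of valid red-blue patterns on a $P_3$ in which at least one edge is bichromatic, by the matching-cut constraint — and for each pattern set up a generalized starting pair $(S',T')$ whose core records the bichromatic edge(s). Applying rules R1--R3 exhaustively (Lemma~\ref{l-ll1b}) either returns \texttt{no} or yields a $4$-tuple $(S,T,X,Y)$ in polynomial time, with $G$ admitting the desired colouring iff it admits a valid red-blue $(S,T,X,Y)$-colouring. The remaining task is to colour $V\setminus(X\cup Y)$; by Lemma~\ref{l-ll1b}(iii) each such vertex has at most one neighbour in $X\setminus S$ and at most one in $Y\setminus T$, so the connected components of $G-(X\cup Y)$ interact with $X\cup Y$ only weakly. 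Using the $(H+P_3)$-freeness I would show that the components of $G-(X\cup Y)$ that are \emph{not} forced to be monochromatic together induce an $H$-free graph (any induced $H$ among them would be anti-complete to the bichromatic $P_3$ captured in the core), so those components are resolved by the assumed $H$-free algorithm, while the monochromatic-forced part is handled by Lemma~\ref{l-ll2b}.

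The hard part will be making the dominance/anti-completeness argument fully rigorous: I must verify that whenever a valid colouring is bichromatic on a chosen induced $P_3$, the "leftover" vertices that genuinely need the $H$-free oracle really do induce an $H$-free graph, i.e.\ that no induced copy of $H$ can survive once the $P_3$ and its propagated neighbourhood $(X,Y)$ are removed. This requires a careful choice of \emph{which} induced $P_3$ to branch on — one should pick a $P_3$ witnessing the bichromatic structure so that any surviving induced $H$ is forced to be anti-complete to it, yielding the forbidden $H+P_3$. Managing the interface between the R1--R3 propagation (which can enlarge $X\cup Y$ unpredictably) and the $H$-freeness of the residual graph, and ensuring the oracle is only applied to a genuinely $H$-free induced subgraph whose colouring extends consistently to all of $G$, is where the real work lies; the branching and running-time bookkeeping are then routine, giving $O(n^3)$ branches each processed in polynomial time plus one oracle call.
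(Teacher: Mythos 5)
Your structural observation is sound but you have picked the wrong half of it, and this opens a gap your plan cannot close. You branch on an induced $P_3$ and conclude (correctly) that $G-N[\{a,b,c\}]$ is $H$-free, intending to resolve the resulting $H$-free residual components ``by the assumed $H$-free algorithm''. But the hypothesis only provides a decision procedure for {\sc Matching Cut} on $H$-free graphs: it answers whether a standalone connected $H$-free graph has \emph{some} matching cut. It does not decide whether an $H$-free component $F$ of $G-(X\cup Y)$ admits a (possibly non-monochromatic) red-blue colouring consistent with the colours already fixed on $X\cup Y$ and with the budget constraints on the vertices of $X\setminus S$ and $Y\setminus T$ that $F$ shares with its sibling components. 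That is a constrained extension version of the problem, which you are not entitled to assume is polynomial-time solvable; Lemma~\ref{l-ll2b} covers only the case where every component is monochromatic, and since $H$ is arbitrary, nothing in your argument forces the residual $H$-free components to be monochromatic. Your preliminary case (``every induced $P_3$ is monochromatic in the target colouring'') is likewise only asserted, not proved.

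The paper takes the opposite decomposition: it guesses an induced copy of $H$ (not of $P_3$) together with a colouring of $V(G')\cup N(V(G'))$ --- only $O(m2^{|V(H)|}n^{|V(H)|})$ branches since $H$ is fixed --- so that the residual $G-(X\cup Y)$ is $P_3$-free, i.e.\ a disjoint union of cliques. Cliques of size other than two are automatically monochromatic, and the two-vertex cliques are eliminated by first checking in $O(m^2)$ time whether $G$ has a matching cut of size at most~$2$; after that preprocessing, every residual component must be monochromatic and Lemma~\ref{l-ll2b} finishes the branch. The $H$-free oracle is invoked only in the trivial case where $G$ contains no induced copy of $H$ at all, so it is always applied to the whole graph with no boundary conditions. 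To repair your route you would need the strictly stronger assumption that the constrained colouring problem is polynomial on $H$-free graphs, which is not what the theorem hypothesizes.
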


\begin{proof}
Assume that {\sc Matching Cut} is polynomial-time solvable for $H$-free graphs. Let $G$ be a connected $(H+P_3)$-free graph with $n$ vertices and $m$ edges. We first check if $G$ has a matching cut of size at most~$2$. We can do this in polynomial time by considering all $O(m^2)$ options of choosing two edges. From now on we assume that $G$ has no matching cut of size at most~$2$; in particular this implies that $G$ has no vertex of degree~$1$.

We may also assume that $G$ has an induced subgraph $G'$ that is isomorphic to $H$; else we are done by our assumption. Let $G^*$ be the graph obtained from $G$ after removing every vertex of $V(G')\cup N(V(G'))$. 
As $G'$ is isomorphic to $H$ and $G$ is $(H+P_3)$-free, $G^*$ is $P_3$-free.

We continue as follows. We first consider all $O(m)$ options of choosing an edge from $E(G)$, one of whose end-vertices we colour red and the other one blue.
Afterwards, for each (uncoloured) vertex in $G'$ we consider all options of colouring it either red or blue. As $G'$ is isomorphic to $H$, there are $2^{|V(H)|}$ options of doing this. As $H$ is a fixed graph, this is a constant number.
Now, for every red vertex~$u$ of $G'$ with no blue neighbour, we consider all $O(n)$ options of colouring at most one of its neighbours blue (and thus all other not yet coloured neighbours of $u$ will be coloured red).
Similarly, for every blue vertex $v$ of $G'$ with no red neighbour, we consider all $O(n)$ options of colouring at most one of its neighbours red (and thus all other neighbours of $v$ will be coloured blue). Note that afterwards each vertex of $V(G')\cup N(V(G'))$ is either coloured red or blue. 

There are  $O(m2^{|V(H)|}n^{|V(H)|})$ options in total of colouring the end-vertices of an edge in $G$ and the vertices of $G'$. In each option, we have at least one red vertex and at least one blue vertex. We now consider the options one by one. 

Consider an option as described above.
In particular, let $e=uv$ be the chosen edge whose end-vertices we coloured differently, say we coloured $u$ red and $v$ blue.
We first check in polynomial time if every red vertex in this option has at most one blue neighbour and if every blue vertex has at most one red neighbour. If one of these two conditions does not hold, we discard the option. 
Now let $S'$ consist 
of $u$ and all red vertices of $V(G')\cup N(V(G'))$, and let $T'$ consists of $v$ and all blue vertices of $V(G')\cup N(V(G'))$. We let $S''\subseteq S'$ consist of all red vertices that have (exactly) one blue neighbour, and we let $T''\subseteq T'$ consist of all blue vertices that have (exactly) one red neighbour. By construction (recall that we started with picking an edge whose end-vertices we coloured differently), $|S''|=|T''|\geq 1$. Hence, we can consider $(S',T')$ as a generalized starting pair with core $(S'',T'')$. 

Our algorithm will now check if $G$ has a valid $(S'',T'',S',T')$ red-blue colouring by applying rules R1--R3 exhaustively. If we find a no-answer, then we can discard the option by Lemma~\ref{l-ll1b}. Otherwise, we found in polynomial time, again by Lemma~\ref{l-ll1b}, 
a $4$-tuple $(S,T,X,Y)$, for which the following holds:

\begin{itemize}
\item [(i)] $S''\subseteq S\subseteq X$; $S'\subseteq X$; $T''\subseteq T\subseteq Y$; $T'\subseteq Y$; and $X\cap Y=\emptyset$,\\[-10pt]
\item [(ii)] $G$ has a valid red-blue $(S'',T'',S',T')$-colouring  if and only if $G$ has valid red-blue $(S,T,X,Y)$-colouring, and\\[-10pt]
\item [(iii)] every vertex in $V\setminus (X\cup Y)$ has no neighbour in $S\cup T$; at most one neighbour in $X\setminus S$ and at most one neighbour in $Y\setminus T$.
\end{itemize}

\noindent
We now prove the following claim.

\medskip
\noindent
{\it Claim.  All connected components of $G-(X\cup Y)$ are monochromatic in every valid red-blue $(S,T,X,Y)$-colouring of $G$.}

\medskip
\noindent
We prove the claim as follows.
For a contradiction, assume $G$ has a valid red-blue $(S,T,X,Y)$-colouring, for which at least one connected component $F$ of $G-(X\cup Y)$ is not monochromatic. As  $V(G')\cup N(V(G'))\subseteq S'\cup T'$ and $S'\subseteq X$ and $T'\subseteq Y$, we find that $V(F)$ belongs to $G^*$ (recall that $G^*$ is the $P_3$-free graph obtained from $G$ after deleting the vertices of $V(G')\cup N(V(G'))$). Hence, $F$ is $P_3$-free and thus as $F$ is connected, $F$ must be a complete graph. If $F$ consists of one vertex or at least three vertices, then $F$ must be monochromatic. Hence, $F$ consists of exactly two vertices $x$ and $y$.

By statement (iii), both $x$ and $y$ have no neighbour in $S\cup T$; at most one neighbour in $X\setminus S$ and at most one neighbour in $Y\setminus T$. If $x$ and $y$ have a common neighbour, then $F$ must be monochromatic. 
If one of them, say $x$, has a neighbour in both $X\setminus S$ and a neighbour in $Y\setminus T$, then $y$ must be coloured with the same colour as $x$ and thus again $F$ must be monochromatic. 
As $G$ has minimum degree~$2$, this means that $x$ and $y$ each have exactly one neighbour in $X\cup Y$, and these two vertices in $X\cup Y$ must be different. 

Let $x'$ be the unique neighbour of $x$ in $X\cup Y$, and let $y'$ be the unique neighbour of $y$ in $X\cup Y$, so $x'\neq y'$. However, now we find that $G$ has a matching cut of size~$2$, namely the set $\{xx',yy'\}$, a contradiction. This completes the proof of the claim. \dia

\medskip
\noindent
Due to the above claim, we can now check in polynomial time, by using Lemma~\ref{l-ll2b}, whether $G$ has a valid red-blue $(S,T,X,Y)$-colouring in which every connected component in $G-(X\cup Y)$ is monochromatic. If so, then we are done, and else we discard the option.

The correctness of our algorithm follows from its description. As the total number of branches is $O(m2^{|V(H)|}n^{|V(H)|})$ and we can process each branch in polynomial time, the total running time of our algorithm is polynomial.
Hence, we have proven the theorem.
\qed
\end{proof}

\noindent
Bonsma~\cite{Bo09} proved that {\sc Matching Cut} is polynomial-time solvable for the class of {\it claw-free} graphs, that is, $K_{1,3}$-free graphs.
By combining, respectively, Corollary~\ref{c-p6} and Bonsma's result with $s$ applications of Theorem~\ref{t-h} we obtain the following result.

\begin{theorem}\label{t-p3p6}
For every integer $s\geq 0$, {\sc Matching Cut} is polynomial-time solvable for $(sP_3+P_6)$-free graphs and for $(sP_3+K_{1,3})$-free graphs.
\end{theorem}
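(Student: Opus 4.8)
The plan is to prove both statements simultaneously by induction on $s$, using Theorem~\ref{t-h} as the inductive step and two different base cases. Everything reduces to recognising that $sP_3 + P_6$ (resp. $sP_3 + K_{1,3}$) is obtained from $P_6$ (resp. $K_{1,3}$) by repeatedly taking the disjoint union with a copy of $P_3$.

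I would first fix the base cases. For the $P_6$ family, the case $s = 0$ is exactly Corollary~\ref{c-p6}, which gives a polynomial-time algorithm for $P_6$-free graphs. For the $K_{1,3}$ family, the case $s = 0$ is Bonsma's theorem, which gives a polynomial-time algorithm for claw-free (that is, $K_{1,3}$-free) graphs.

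Next I would set up the induction. Define $H_0 := P_6$ and $H_{i+1} := H_i + P_3$; since the disjoint union is associative and commutative, a one-line induction shows $H_i = iP_3 + P_6$, so that $H_s = sP_3 + P_6$. Assuming {\sc Matching Cut} is polynomial-time solvable on $H_i$-free graphs, Theorem~\ref{t-h} applied with $H := H_i$ gives polynomial-time solvability on $(H_i + P_3)$-free graphs, i.e. on $H_{i+1}$-free graphs. After $s$ applications of Theorem~\ref{t-h} we obtain the claim for $(sP_3 + P_6)$-free graphs. The argument for $(sP_3 + K_{1,3})$-free graphs is word-for-word the same, starting instead from $H_0 := K_{1,3}$ and Bonsma's theorem.

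I do not expect a genuine obstacle: all the real work is already carried out in Theorem~\ref{t-h} and in the two base-case results, and the inductive step is a direct substitution $H := H_i$. The only things to check are the bookkeeping identities $iP_3 + P_6 = ((i-1)P_3 + P_6) + P_3$ and $iP_3 + K_{1,3} = ((i-1)P_3 + K_{1,3}) + P_3$ (immediate from the definition of $sG$ together with associativity and commutativity of the disjoint union), and that the running time stays polynomial: as $s$ is a fixed constant and each of the $s$ applications of Theorem~\ref{t-h} preserves polynomial-time solvability, the composed algorithm still runs in polynomial time.
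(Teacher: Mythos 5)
Your proposal is correct and matches the paper's argument exactly: the paper also obtains the result by starting from Corollary~\ref{c-p6} (respectively Bonsma's theorem for claw-free graphs) and applying Theorem~\ref{t-h} $s$ times. You have merely made explicit the routine induction and bookkeeping that the paper leaves implicit.
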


\section{A Partial Complexity Classification for H-Free Graphs}\label{s-class}

The {\it girth} of a graph that is not a tree is the length of a shortest cycle in it.
Bonsma~\cite{Bo09} proved that {\sc Matching Cut} is \NP-complete for planar graphs of girth~$5$, and thus for $C_r$-free graphs with $r\in \{3,4\}$. Hence, {\sc Matching Cut} is \NP-complete for $H$-free graphs whenever $H$ contains a $C_3$ or $C_4$.  Le and Randerath~\cite{LR03} proved that
{\sc Matching Cut} is \NP-complete for bipartite graphs of minimum degree~$3$ and maximum degree~$4$. 
Consequently, {\sc Matching Cut} is \NP-complete for $H$-free graphs whenever $H$ contains an odd cycle.
We use a result of Moshi~\cite{Mo89} to prove the same for the case where $H$ has a not necessarily odd cycle.

Let $uv$ be an edge in a graph $G$. We replace the edge by two new vertices $w_1$ and $w_2$ and edges $uw_1$, $uw_2$, $vw_1$ and $vw_2$. We call this operation a  {\it $K_{2,2}$-replacement} and denote the resulting graph by $G_{uv}$; see also Fig.~\ref{f-moshi}. We can now state the following lemma.

\begin{figure}
\begin{center}
\scalebox{1.5}{\begin{tikzpicture}
\tikzstyle{bvertex}=[thin,circle,inner sep=0.cm, minimum size=1.7mm, fill=black, draw=black]
\tikzstyle{edge} = [thin, gray]

\def\k{0.7}
\node[bvertex, label= below:$u$](u) at (0,0){};
\node[bvertex, label= below:$v$](v) at (\k,0){};

\node[bvertex](u1) at (150:\k){};
\node[bvertex](u2) at (180:\k){};
\node[bvertex](u3) at (210:\k){};

\begin{scope}[shift= {(\k,0)}]
\node[bvertex](v1) at (45:\k){};
\node[bvertex](v2) at (15:\k){};
\node[bvertex](v3) at (-15:\k){};
\node[bvertex](v4) at (-45:\k){};
\end{scope}

\draw[thick, nicered](u)--(v);

\foreach \i in {1,...,3}{
	\draw[edge] (u) -- (u\i);
	}
	\foreach \i in {1,...,4}{
	\draw[edge] (v) -- (v\i);
	}

\begin{scope}[shift = {(5*\k,0)}]
\node[bvertex, label= below:$u$](u) at (0,0){};
\node[bvertex, label= below:$v$](v) at (1.41*\k,0){};
\node[bvertex, label= above:$w_1$](w1) at (0.705*\k,0.65*\k){};
\node[bvertex, label= below:$w_2$](w2) at (0.705*\k,-0.65*\k){};

\node[bvertex](u1) at (150:\k){};
\node[bvertex](u2) at (180:\k){};
\node[bvertex](u3) at (210:\k){};

\begin{scope}[shift= {(1.41*\k,0)}]
\node[bvertex](v1) at (45:\k){};
\node[bvertex](v2) at (15:\k){};
\node[bvertex](v3) at (-15:\k){};
\node[bvertex](v4) at (-45:\k){};
\end{scope}

%\draw[thick, OrangeRed](u)--(v);
\draw[thick, nicered](u)--(w1);
\draw[thick, nicered](u)--(w2);
\draw[thick, nicered](w1)--(v);
\draw[thick, nicered](w2)--(v);

\foreach \i in {1,...,3}{
	\draw[edge] (u) -- (u\i);
	}
	\foreach \i in {1,...,4}{
	\draw[edge] (v) -- (v\i);
	}
\end{scope}

\end{tikzpicture}}
\caption{The $K_{2,2}$-replacement applied on edge $uv$.}\label{f-moshi} 
\end{center}
\end{figure}
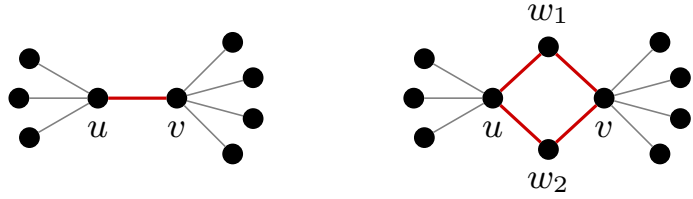

\begin{lemma}[\cite{Mo89}]\label{l-uv}
For any edge $uv$ of a graph $G$, the graph $G$ has a matching cut if and only if $G_{uv}$ has a matching cut.
\end{lemma}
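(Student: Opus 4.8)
The plan is to work entirely with valid red-blue colourings via Observation~\ref{o}, so that it suffices to show that $G$ admits a valid red-blue colouring if and only if $G_{uv}$ does. Throughout I would use that the four vertices $u,w_1,v,w_2$ induce a $4$-cycle in $G_{uv}$ and that $w_1,w_2$ have degree exactly $2$, with $N_{G_{uv}}(w_1)=N_{G_{uv}}(w_2)=\{u,v\}$.

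For the forward direction, I would start from a valid red-blue colouring $c$ of $G$ and extend it to $G_{uv}$ by colouring only $w_1$ and $w_2$. If $c$ gives $u$ and $v$ the same colour, I colour both $w_1$ and $w_2$ with that colour; then $w_1,w_2$ have only same-coloured neighbours, and deleting the (monochromatic) edge $uv$ while adding the four new same-coloured edges does not change the number of opposite-coloured neighbours of $u$ or of $v$. If instead $c$ gives $u$ and $v$ different colours, say $u$ red and $v$ blue, then $v$ is the unique blue neighbour of $u$ and $u$ the unique red neighbour of $v$; I colour $w_1$ red and $w_2$ blue, so that after deleting $uv$ each of $u,v$ again has exactly one opposite-coloured neighbour (namely $w_2$ and $w_1$, respectively), and $w_1,w_2$ each see exactly one opposite colour. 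In both cases the resulting colouring of $G_{uv}$ is valid and still uses both colours.

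For the backward direction I would take a valid red-blue colouring $c$ of $G_{uv}$ and simply restrict it to $V(G)$. The only edge of $G$ not present in $G_{uv}$ is $uv$, so the only thing to verify is that reinstating $uv$ does not create a vertex with two opposite-coloured neighbours. The key step is to read off, from the $4$-cycle on $u,w_1,v,w_2$, how many of $w_1,w_2$ are coloured opposite to $u$ and to $v$. If $u$ and $v$ have the same colour, then a vertex $w_i$ of the opposite colour would see two neighbours of the other colour, which is forbidden; hence $w_1,w_2$ must both take the colour of $u$ and $v$, and reinstating the monochromatic edge $uv$ changes no opposite-coloured count. If $u$ and $v$ have different colours, then $w_1,w_2$ cannot both be red (else $v$ would have two red neighbours) nor both blue (else $u$ would have two blue neighbours), so exactly one of them is red and one is blue; this forces $u$ to have no opposite-coloured neighbour outside $\{w_1,w_2\}$ and likewise for $v$, so that after deleting $w_1,w_2$ and reinstating the edge $uv$ each of $u,v$ has exactly one opposite-coloured neighbour. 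In every case the restriction is a valid red-blue colouring of $G$, and it uses both colours because any vertex witnessing the second colour can be chosen outside $\{w_1,w_2\}$.

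The main obstacle is precisely this neighbour-counting in the backward direction: one must argue that the gadget $\{w_1,w_2\}$ absorbs exactly the ``opposite-colour budget'' that the edge $uv$ demands in $G$, so that no vertex ever acquires a second opposite-coloured neighbour. The case analysis on whether $u$ and $v$ receive equal or different colours, together with the degree-$2$ structure of $w_1,w_2$ inside the induced $4$-cycle, is what makes this bookkeeping go through.
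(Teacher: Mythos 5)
Your proof is correct. Note that the paper itself gives no proof of this lemma at all: it is stated with a citation to Moshi~\cite{Mo89} and used as a black box, so there is nothing in the paper to compare against. Your self-contained argument via valid red-blue colourings (Observation~\ref{o}) is sound: in the forward direction the two extensions (monochromatic $w_1,w_2$ when $u,v$ agree; one red and one blue when they differ) preserve validity exactly as you compute, and in the backward direction the degree-$2$ structure of $w_1,w_2$ on the induced $C_4$ correctly forces either both $w_i$ to share the common colour of $u,v$, or exactly one of each colour when $u,v$ differ, which in turn caps the opposite-coloured neighbours of $u$ and $v$ outside the gadget at zero. Your handling of the ``both colours used'' condition in each case is also right, since when $u,v$ agree the $w_i$ are forced to agree with them and any blue witness lies in $V(G)$, and when they differ $u$ and $v$ themselves witness both colours. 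The one point worth stating explicitly, since matching cuts are defined only for connected graphs, is that $G_{uv}$ is connected if and only if $G$ is (the path $u$--$w_1$--$v$ replaces the deleted edge $uv$); this is immediate but should be said before invoking Observation~\ref{o} on both graphs.
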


\noindent
We can show the following result.

\begin{lemma}\label{l-cycle}
For every graph $H$ that is not a forest, {\sc Matching Cut} is \NP-complete for $H$-free graphs.
\end{lemma}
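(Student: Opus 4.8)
The plan is to reduce the task to finding, for each fixed $H$ that is not a forest, an \NP-hard subclass of graphs that is $H$-free. Write $g$ for the length of a shortest cycle of $H$ (so $g\ge 3$, since $H$ contains a cycle). A shortest cycle is always chordless, so $H$ contains $C_g$ as an induced subgraph. Consequently, any class of $C_g$-free graphs is automatically $H$-free, and it suffices to show that {\sc Matching Cut} is \NP-complete on some class of $C_g$-free graphs; membership in \NP\ is clear. This reduces everything to producing, for the given value of $g$, hard instances with no induced $C_g$.

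For $g\in\{3,4\}$ I would invoke Bonsma's result that {\sc Matching Cut} is \NP-complete for planar graphs of girth~$5$~\cite{Bo09}: such graphs contain neither $C_3$ nor $C_4$ even as a (not necessarily induced) subgraph, so they are $C_g$-free. For $g$ odd I would instead use that bipartite graphs contain no odd cycle at all, hence no induced $C_g$; the bipartite hardness of Le and Randerath~\cite{LR03} then supplies the required hard instances. The interesting case, and the one for which Moshi's lemma is needed, is $g$ even with $g\ge 6$.

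For this case the idea is to start from Bonsma's planar girth-$5$ instances $G$ and repeatedly apply the $K_{2,2}$-replacement to \emph{every} edge. Writing $G^{(0)}=G$ and letting $G^{(i+1)}$ be the graph obtained from $G^{(i)}$ by a $K_{2,2}$-replacement on each of its edges, Lemma~\ref{l-uv}, applied once per edge, guarantees that every $G^{(i)}$ has a matching cut if and only if $G$ does; moreover $G^{(t)}$ is computable in polynomial time for every fixed $t$. The key structural observation I would establish is that in the graph $X'$ resulting from a single $K_{2,2}$-replacement of every edge of a graph $X$, \emph{every} cycle is induced: the graph is bipartite with the new vertices having degree exactly~$2$, so tracing a cycle through the new vertices forces its old vertices to form a cycle of $X$ and leaves no room for a chord. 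This yields the clean recursion
\[ \mathrm{CL}(X') \;=\; \{4\}\cup 2\cdot \mathrm{CL}(X), \]
where $\mathrm{CL}(\cdot)$ denotes the set of cycle lengths. Iterating from $\mathrm{CL}(G)\subseteq\{5,6,7,\dots\}$ gives
\[ \mathrm{CL}\bigl(G^{(t)}\bigr) \;=\; \{2^{2},2^{3},\dots,2^{t+1}\}\ \cup\ 2^{t}\cdot \mathrm{CL}(G). \]

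It then remains to choose $t=t(g)$ so that $g\notin \mathrm{CL}(G^{(t)})$, which makes the whole class obtained this way $C_g$-free and hence $H$-free, completing the reduction. If $g$ is not a power of two, any $t$ with $2^{t}\cdot 5>g$ works, since then $g$ avoids both the powers of two and the ``large'' cycles $2^{t}\cdot\mathrm{CL}(G)$, whose smallest element is $2^{t}\cdot 5$ (as $G$ has girth exactly~$5$). The delicate point -- and the step I expect to be the main obstacle -- is when $g=2^{k}$ is itself a power of two (so $k\ge 3$): one cannot simply take $t$ large, because the cascade of newly created $C_4$'s forces every power of two up to $2^{t+1}$ into $\mathrm{CL}(G^{(t)})$. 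Instead one must stop at exactly $t=k-2$, so that the largest ``small'' cycle has length $2^{t+1}=2^{k-1}<g$ while the smallest ``large'' cycle has length $2^{t}\cdot 5=\tfrac{5}{4}\,2^{k}>g$; thus $g$ falls into the gap between the two parts of the spectrum and is excluded. Verifying this window argument, together with the claim that every cycle of a $K_{2,2}$-replacement is induced (which is what makes the recursion for $\mathrm{CL}$ exact), is the technical heart of the proof.
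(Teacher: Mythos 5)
Your proof is correct, but it is organised quite differently from the paper's. The paper splits on whether $H$ contains an \emph{induced} $C_4$: if so, it is done by Bonsma's girth-$5$ result~\cite{Bo09}; if not, it reduces from {\sc Matching Cut} on \emph{general} graphs, applies $K_{2,2}$-replacements until ``every cycle not isomorphic to $C_4$ has length at least $|V(H)|+1$'', and concludes $H$-freeness directly (no appeal to the girth of $H$, and no use of the bipartite result of~\cite{LR03}). You instead reduce to $C_g$-freeness for $g$ the girth of $H$, split on $g\in\{3,4\}$ / $g$ odd / $g$ even $\geq 6$, and in the last case start from Bonsma's girth-$5$ instances and track the exact cycle spectrum under iterated replacement. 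Your route is longer, but the extra care is not wasted: as your recursion $\mathrm{CL}(G^{(t)})=\{4,8,\ldots,2^{t+1}\}\cup 2^{t}\cdot\mathrm{CL}(G)$ shows, iterating the replacement unavoidably creates induced cycles of \emph{every} power-of-two length up to $2^{t+1}$, so the paper's stated goal that all non-$C_4$ cycles be longer than $|V(H)|$ is not literally attainable once $|V(H)|\geq 8$; the paper's argument really only rules out short cycles of non-power-of-two length, which suffices unless the shortest cycle of $H$ has length $2^{k}$ with $k\geq 3$ (e.g.\ $H=C_8$). Your ``window'' argument --- stopping at $t=k-2$ so that $2^{k}$ falls strictly between $2^{t+1}$ and $5\cdot 2^{t}$, which is exactly where starting from girth-$5$ rather than arbitrary instances matters --- handles precisely that residual case, so your proof is, if anything, the more watertight of the two. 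Two cosmetic points: your spectrum formula should be read for $t\geq 1$, and the odd-$g$ case could also be absorbed into the even/window machinery, but using bipartiteness there is perfectly fine.
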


\begin{proof}
Let $H$ be a graph with a cycle.
If $H$ contains an induced $C_4$, then we obtain \NP-completeness as an immediate consequence of the aforementioned \NP-completeness result of Bonsma~\cite{Bo09} for planar graphs of girth~$5$. 
Now assume that $H$ is $C_4$-free. 

We reduce from {\sc Matching Cut} for general graphs. Let $G$ be a graph.
On each edge of $G$ we apply sufficiently many $K_{2,2}$-replacements such that every cycle in the resulting graph $G'$ that is not isomorphic to $C_4$ has length at least $|V(H)|+1$. As $H$ is $C_4$-free and $H$ has a cycle, this means that $G'$ is $H$-free.
 By repeated applications of Lemma~\ref{l-uv}, we find that $G$ has a matching cut if and only if $G'$ has a matching cut. \qed
\end{proof}

\begin{figure}
\begin{center}
  \begin{tikzpicture}

    \tikzstyle{vertex}=[thin,circle,inner sep=0.cm, minimum size=1.7mm, fill=black, draw=black]
\tikzstyle{hedge}=[thick, draw = gray]

    \def\k{0.7}      
          
    \node[vertex] (a1) at (0,0){};
    \node[vertex] (a2) at (1*\k,0){};
	\node[vertex] (a3) at (2*\k,0){};
    \node[vertex] (a4) at (3*\k,0){};
    \node[vertex] (a5) at (4*\k,0){};
    \node[vertex] (a6) at (5*\k,0){};
    
    \node[vertex] (b1) at (1.5*\k,\k){};
    \node[vertex] (b2) at (2.5*\k,\k){};
	\node[vertex] (b3) at (3.5*\k,\k){};

	\draw[hedge](a1)--(a2);
	\draw[hedge](a3)--(a2);
	\draw[hedge](a3)--(a4);
	\draw[hedge](a5)--(a4);
	\draw[hedge](a5)--(a6);
 
    \draw[hedge](b1)--(b2);
	\draw[hedge](b3)--(b2);

\begin{scope}[shift = {(7*\k,0)}]
    \node[vertex] (b1) at (\k,-0.5*\k){};
    \node[vertex] (b2) at (\k,0.5*\k){};
	\node[vertex] (b3) at (\k,1.5*\k){};
	
	\node[vertex] (a1) at (2*\k,0.5*\k){};
	
    \node[vertex] (c1) at (3*\k,-0.5*\k){};
    \node[vertex] (c2) at (3*\k,0.5*\k){};
	\node[vertex] (c3) at (3*\k,1.5*\k){};
	
	\draw[hedge](a1)--(c1);
	\draw[hedge](a1)--(c2);
	\draw[hedge](a1)--(c3);		
	
    \draw[hedge](b1)--(b2);
	\draw[hedge](b3)--(b2); 
\end{scope}
	
\end{tikzpicture}
\caption{The graphs $P_3+P_6$ (left) and $P_3+K_{1,3}$ (right).}\label{f-smallgraphs}
\end{center}
\end{figure}
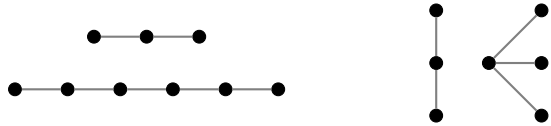

\noindent
For two graphs $G_1$ and $G_2$ we write $G_1\ssi G_2$ if $G_1$ is an induced subgraph of~$G_2$.
Let $H^*$ be the ``H''-graph, which is the graph with vertices $a_1,b_1,c_1$, $a_2,b_2,c_2$ and edges $a_ib_i$, $b_ic_i$ ($i=1,2$) and $b_1b_2$.
We can now show the following summary theorem; see also Fig.~\ref{f-smallgraphs}.

\begin{theorem}\label{t-clas}
For a graph~$H$, {\sc Matching Cut} on $H$-free graphs is 
\begin{itemize}
\item polynomial-time solvable if $H\ssi sP_3+K_{1,3}$ or $sP_3+P_6$ for some $s\geq 0$, and
\item \NP-complete if $H\si C_r$ for some $r\geq 3$, $K_{1,4}$, $P_{19}$, $4P_5$ or $H^*$.
\end{itemize}
\end{theorem}

\begin{proof}
The polynomial-time solvable cases follow from Theorem~\ref{t-p3p6}.
If $H$ has a cycle, then we apply Lemma~\ref{l-cycle}. Now suppose that $H$ has no cycle so $H$ is a forest.
First suppose that $H$ is a forest that contains a vertex of degree at least~$4$. Then $H$ contains an induced $K_{1,4}$, and thus the class of $H$-free graphs contains the class of $K_{1,4}$-free graphs.
The result then follows from the aforementioned result of Chv{\'{a}}tal~\cite{Ch84}, which in fact shows that {\sc Matching Cut} is \NP-complete even for $K_{1,4}$-free graphs, as observed by Bonsma~\cite{Bo09} and Kratch and Le~\cite{KL16}. The remaining three results are proven in~\cite{LPRb}.\qed
\end{proof}

\noindent
Every forest that is $P_r$-free for some positive constant~$r$ and that has maximum degree at most~$4$ has a constant-bounded diameter, so has constant size. Hence, Theorem~\ref{t-clas} has the following consequence.

\begin{corollary}\label{c-open}
There only exists a finite number of \emph{connected} graphs $H$ for which the computational complexity of {\sc Matching Cut} is open when restricted on $H$-free graphs.
\end{corollary}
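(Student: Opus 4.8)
The plan is to read off from Theorem~\ref{t-clas} exactly which connected graphs~$H$ remain unclassified and then to argue that there are only finitely many of them. First I would fix the constant~$r$ coming from Feghali's \NP-completeness result~\cite{Fe} and recall that the second bullet of Theorem~\ref{t-clas} already settles, as \NP-complete, every connected graph~$H$ that contains a cycle, or has a vertex of degree at least~$4$, or contains an induced~$P_r$. Hence a connected graph~$H$ can fail to be classified only if it avoids all three of these conditions simultaneously: being acyclic and connected it must be a \emph{tree}; having no vertex of degree at least~$4$ it must have maximum degree at most~$3$; and it must be $P_r$-free. So it suffices to show that there are only finitely many trees of maximum degree at most~$3$ that are $P_r$-free.

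The key step is the combinatorial observation flagged immediately before the corollary, namely that such a tree has constant size. I would prove this by exploiting the fact that \emph{in a tree the unique path between any two vertices is an induced path}. Consequently, $P_r$-freeness forces every vertex-to-vertex path of~$H$ to have at most $r-1$ vertices, so $\diam(H)\le r-2$, and therefore $\radius(H)\le\diam(H)\le r-2$ as well. Rooting~$H$ at a vertex of eccentricity equal to its radius, every vertex lies within distance $r-2$ of the root, and since each vertex has at most three neighbours the usual ball-growth bound gives $|V(H)|\le 3\cdot 2^{\,r-2}$. As $r$ is a fixed constant, this bounds $|V(H)|$ by a constant, so only finitely many such trees exist up to isomorphism.

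Putting the two steps together, the set of connected graphs~$H$ that escape the \NP-completeness part of Theorem~\ref{t-clas} is finite, and \emph{a fortiori} the set of connected~$H$ whose complexity is genuinely open (neither resolved as polynomial-time solvable nor as \NP-complete by Theorem~\ref{t-clas}) is finite as well, which is exactly the claim. The proof is essentially immediate once Theorem~\ref{t-clas} is available; the only point that needs care is the passage from $P_r$-freeness to bounded diameter, and here the crucial and only mildly subtle ingredient is precisely that paths in a tree are automatically induced, so no separate argument handling non-induced subpaths is required.
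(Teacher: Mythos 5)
Your proposal is correct and follows essentially the same route as the paper, which disposes of the corollary in the single remark preceding it: a $P_r$-free forest of bounded maximum degree has constant-bounded diameter and hence constant size. You merely spell out the two details the paper leaves implicit (paths in trees are induced, and the ball-growth bound), so there is nothing to add.
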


\section{Conclusions}\label{s-con}

We gave a complexity dichotomy for {\sc Matching Cut} for graphs of bounded radius and proved 
a number of new results on the complexity of {\sc Matching Cut} for $H$-free graphs. We summarized all the known results for $H$-free graphs in Theorem~\ref{t-clas} and showed that although there still exists an infinite number of unresolved cases, the number of open cases where $H$ is a connected graph is finite.

We finish our paper with a number of open problems. Recall that Le and Le~\cite{LL19} showed that {\sc Matching Cut} for bipartite graphs of diameter at most~$d$ is polynomial-time solvable if $d\leq 3$ and \NP-complete for $d\geq 4$. Their hardness construction has radius~$4$, and we therefore pose the following open problem:

\begin{open}
Determine the complexity of {\sc Matching Cut} for bipartite graphs of radius~$3$.
\end{open}

\noindent
A standard ingredient of determining the complexity of a problem for $H$-free graphs is to first consider classes of large girth. If a problem is \NP-complete for graphs of girth at least~$g$, for every fixed integer~$g\geq 3$, then it is \NP-complete for $H$-free graphs whenever $H$ has a cycle; just take $g$ to be larger than the length of a largest cycle in $H$. However, to prove Lemma~\ref{l-cycle} we could only use the result of Bonsma~\cite{Bo09} for (planar) graphs of girth~$5$ and had to rely on the construction of Moshi~\cite{Mo89} to show hardness if $H$ has a cycle of length at least~$5$. Hence, we believe the following open problem of Le and Le~\cite{LL19} is interesting.

\begin{open}[\cite{LL19}]
Determine for every $g\geq 6$, the complexity of {\sc Matching Cut} for graphs of girth~$g$.
\end{open}

\noindent
For $K_{1,t}$-free graphs, the complexity of {\sc Matching Cut} is fully determined (see Theorem~\ref{t-clas}) with the problem becoming \NP-complete for $t\geq 4$. On the positive side,  Kratsch and Le~\cite{KL16} proved that {\sc Matching Cut} can be solved in polynomial time for $K_{1,4}$-free graphs that in addition are also $(K_{1,4}+e)$-free, where $K_{1,4}+e$ is the graph obtained from~$K_{1,4}$ by adding an edge between two of its leaves. We finish our paper with the following open problem, for which we identified some borderline cases; the {\it chair} is the graph obtained from the 
claw~$K_{1,3}$ after subdividing one of its edges exactly once.

\begin{open}
Complete the classification of {\sc Matching Cut} for $H$-free graphs; in particular what is the complexity of {\sc Matching Cut} for chair-free graphs, 
$2P_4$-free graphs and $P_7$-free graphs?
\end{open}

\end{document}